\newcommand{\convexpath}[2]{
[
    create hullnodes/.code={
        \global\edef\namelist{#1}
        \foreach [count=\counter] \nodename in \namelist {
            \global\edef\numberofnodes{\counter}
            \node at (\nodename) [draw=none,name=hullnode\counter] {};
        }
        \node at (hullnode\numberofnodes) [name=hullnode0,draw=none] {};
        \pgfmathtruncatemacro\lastnumber{\numberofnodes+1}
        \node at (hullnode1) [name=hullnode\lastnumber,draw=none] {};
    },
    create hullnodes
]
($(hullnode1)!#2!-90:(hullnode0)$)
\foreach [
    evaluate=\currentnode as \previousnode using \currentnode-1,
    evaluate=\currentnode as \nextnode using \currentnode+1
    ] \currentnode in {1,...,\numberofnodes} {
  let
    \p1 = ($(hullnode\currentnode)!#2!-90:(hullnode\previousnode)$),
    \p2 = ($(hullnode\currentnode)!#2!90:(hullnode\nextnode)$),
    \p3 = ($(\p1) - (hullnode\currentnode)$),
    \n1 = {atan2(\y3,\x3)},
    \p4 = ($(\p2) - (hullnode\currentnode)$),
    \n2 = {atan2(\y4,\x4)},
    \n{delta} = {-Mod(\n1-\n2,360)}
  in
    {-- (\p1) arc[start angle=\n1, delta angle=\n{delta}, radius=#2] -- (\p2)}
}
-- cycle
}
\definecolor{darkblue}{rgb}{0.0,0,0.7} 
\newcommand{\darkblue}{\color{darkblue}} 
\definecolor{darkred}{rgb}{0.7,0,0} 
\definecolor{lightgrey}{rgb}{0.7,0.7,0.7} 
\def\S{\mathrm{Stir}}
\def\Ell{\mathscr{L}}
\newcommand{\inv}{{\sf inv}}
\newcommand{\gc}{d}
\newcommand{\hgt}{{\sf ht}}
\newcommand{\Hgt}{{\sf Ht}}
\newtheorem{theorem}{Theorem}[section]
\newtheorem{proposition}[theorem]{Proposition}
\newtheorem{fact}[theorem]{Fact}
\newtheorem{corollary}[theorem]{Corollary}
\theoremstyle{definition}
\newtheorem{definition}[theorem]{Definition}
\newtheorem{example}[theorem]{Example}
\newtheorem{remark}[theorem]{Remark}
\DeclareFontFamily{U}{rcjhbltx}{}
\DeclareFontShape{U}{rcjhbltx}{m}{n}{<->rcjhbltx}{}
\DeclareSymbolFont{hebrewletters}{U}{rcjhbltx}{m}{n}
\DeclareMathSymbol{\lamed}{\mathord}{hebrewletters}{108}
\newcommand{\defn}[1]{\emph{\darkblue #1}}
\newcommand\IfStringInList[2]{\IfSubStr{,#2,}{,#1,}}
\title[Reflexponents]
  {Reflexponents}
\author[N.~Williams]{Nathan Williams}
\address[N.~Williams]{University of Texas at Dallas}
\email{nathan.f.williams@gmail.com}
\date{\today}
\keywords{}
\subjclass[2000]{Primary 20F55; Secondary 05E10}
\begin{document}

\begin{abstract}
Certain classical generating functions for elements of reflection groups can be expressed using fundamental invariants called \emph{exponents}.
We give new analogues of such generating functions that accommodate orbits of reflecting hyperplanes using similar invariants we call \emph{reflexponents}.  Our verifications are case-by-case. 
\end{abstract}

\keywords{Complex reflection group, Weyl group, Chevalley group, Poincar\'e series, Hilbert series, exponent, degree}

\maketitle

\section{Introduction}

In his address at the 1950 International Congress of Mathematicians~\cite{chevalley1950betti,borel1955betti}, 
Chevalley gave the Poincar\'e series of the exceptional simple Lie groups as a product $\prod_{i=1}^n (1+q^{2e_i+1})$.  From the audience, Coxeter recognized these \defn{exponents} \[e_1 \leq e_2 \leq \cdots \leq e_n\] from his earlier computations of the eigenvalues of a product of the simple reflections of the corresponding Weyl group~\cite{coxeter1973regular,coxeter1951product}.
 Such numerology has led to many investigations~\cite{shephard1954finite,shephard1956some,coleman1958betti,steinberg1959finite,solomon1963invariants,kostant2009principal}.


\subsection{Exponents in complex reflection groups}
Let $G$ be a finite irreducible complex reflection group with reflections $\mathcal{R}$ and reflecting hyperplanes $\mathcal{H}$, acting in the reflection representation on the $n$-dimensional complex vector space $V$.  The \defn{exponents} $e_1\leq \cdots \leq e_n$ of $G$ may be defined to be the \defn{fake degrees} of $V$---the degrees in which $V$ occurs in the coinvariant ring of $G$.  The \defn{coexponents} $e_1^* \leq \cdots \leq e^*_n$ are defined to be the fake degrees of $V^*$.  For $\rho$ a representation of $G$ and $g \in G$, write \[\mathcal{M}_\rho(g):=\mathrm{dim}\Big(\mathrm{im}\big(\rho(g)-\mathbbm{1}\big)\Big).\]  The following is well-known~\cite{shephard1954finite,solomon1963invariants}.



\begin{theorem}
\label{thm:solomon}
For $G$ a finite irreducible complex reflection group,
\begin{align*}
\sum_{g \in G} x^{\mathcal{M}_V(g)} = \prod_{i=1}^n \left(1+e_i x\right) \text{ and }
\sum_{g \in G} \mathrm{det}(g) x^{\mathcal{M}_V(g)} = \prod_{i=1}^n \left(1-e_i^* x\right).
\end{align*}
\end{theorem}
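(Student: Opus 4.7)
The plan is to decompose the sum on the left according to the fixed subspace $V^g \subseteq V$. Since $\mathcal{M}_V(g) = \mathrm{codim}(V^g)$, writing $\mathcal{L}$ for the intersection lattice of $\mathcal{H}$ and $a(X) := \#\{g \in G : V^g = X\}$,
\[
\sum_{g \in G} x^{\mathcal{M}_V(g)} = \sum_{X \in \mathcal{L}} a(X)\, x^{\mathrm{codim}(X)}.
\]
By Steinberg's theorem the pointwise stabilizer $G_X$ is itself a complex reflection group of rank $\mathrm{codim}(X)$, acting faithfully on $V/X$ and generated by the reflections in $\mathcal{R}$ whose hyperplane contains $X$.

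For the first identity I would invoke two classical inputs. The Shephard--Todd count states that for any complex reflection group $W$ of rank $r$ the number of elements of $W$ acting with only fixed vector $0$ equals $\prod_{i=1}^r e_i(W)$; applied to each parabolic, this yields $a(X) = \prod_i e_i^X$ (exponents of $G_X$). The Orlik--Solomon factorization $\chi(\mathcal{H}, t) = \prod_{i=1}^n (t - e_i)$ of the characteristic polynomial of the reflection arrangement, combined with the general fact that (for any central hyperplane arrangement) the Poincar\'e polynomial of the complement equals $\sum_X |\mu_\mathcal{L}(V, X)|\, x^{\mathrm{codim}(X)}$, gives the factored form $\prod(1 + e_i x)$. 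A short M\"obius-inversion argument---applying Orlik--Solomon to each parabolic arrangement---identifies $|\mu_\mathcal{L}(V, X)| = \prod_i e_i^X$, so the two perspectives match and yield the first identity.

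The second identity follows by the same decomposition, with $a(X)$ replaced by the determinant-twisted count $b(X) := \sum_{g:\, V^g = X} \det(g)$. The determinant-twisted Shephard--Todd identity, $\sum_{g \in W:\, V^g = 0} \det(g) = (-1)^r \prod_i e_i^*(W)$, yields $b(X) = (-1)^{\mathrm{codim}(X)} \prod_i e_i^{X, *}$, and the coexponent analogue of Orlik--Solomon's factorization produces $\prod(1 - e_i^* x)$.

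The main obstacle is establishing the determinant-twisted Shephard--Todd identity and the coexponent Orlik--Solomon factorization. Both arise most naturally via the bigraded equivariant Hilbert series of $(\mathrm{Sym}(V^*) \otimes \Lambda V^*)^G$, which by the Shephard--Todd--Chevalley structure theorem equals $S^G \otimes \Lambda \langle df_1, \ldots, df_n \rangle$; the coexponent version comes from the dual bigrading (using $\Lambda V$ in place of $\Lambda V^*$). For real reflection groups both collapse since $e_i^* = e_i$; in the general complex case a case-by-case verification may be invoked, consistent with the paper's stated philosophy.
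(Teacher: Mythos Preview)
The paper does not actually prove \Cref{thm:solomon}; it is quoted as a classical background result, attributed to Shephard--Todd and to Solomon's uniform argument via invariant differential forms. So there is no ``paper's own proof'' to match, and your task reduces to checking whether your sketch stands on its own.

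It does not, for two reasons. First, the Orlik--Solomon factorization of the characteristic polynomial of a complex reflection arrangement has roots the \emph{coexponents} $e_i^*$, not the exponents $e_i$; equivalently $|\mu_{\mathcal L}(V,X)|=\prod_i (e_i^X)^*$. (Check $G(a,1,1)$: one hyperplane, so $|\mu(V,0)|=1=e_1^*$, while $e_1=a-1$.) Thus the arrangement-theoretic route you describe is naturally adapted to the \emph{second} identity, not the first; you have the two cases interchanged. Second, the ``Shephard--Todd count'' you invoke, namely $\#\{g\in W:V^g=0\}=\prod_i e_i(W)$, is precisely the leading coefficient of the first identity applied to $W$, so using it for every parabolic $G_X$ to compute $a(X)$ is circular. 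The same circularity afflicts your determinant-twisted count $b(X)=(-1)^{\operatorname{codim}X}\prod_i (e_i^X)^*$.

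What does survive is the remark in your final paragraph: Solomon's structure theorem for $(\operatorname{Sym}(V^*)\otimes\Lambda V^*)^G$ and its dual (with $\Lambda V$) give bigraded Hilbert series which, after the Molien average and a suitable specialization, yield the two identities uniformly. That is the argument the paper is pointing to with its citation of \cite{solomon1963invariants}; it should be the backbone of your proof rather than an afterthought.
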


Our first result gives an analogue of~\Cref{thm:solomon} that incorporates different $G$-orbits of reflecting hyperplanes, using new invariants similar to the (co)exponents.



The reflecting hyperplanes $\mathcal{H}$ of the group $G$ are broken into (at most three) $G$-orbits $\mathcal{H}/G=\{\mathcal{H}_\epsilon\}_{\epsilon \in \{s,t,u\}}$.  Choose one such orbit $\mathcal{H}_\epsilon$, and let $\mathcal{R}_\epsilon$ be the associated set of reflections.  With the exception of two cases addressed in~\Cref{sec:extensions} ($\epsilon = t$ for both $G_{13}$ and $G(ab,b,n)$ with $a,b>1$ and $n>2$), we find in \Cref{sec:num} a particular irreducible representation $V_\epsilon$ of $G$ that restricts to the reflection representation of a parabolic subgroup of $G$ supported on $\mathcal{R}_\epsilon$.  In these cases, we call the orbit $\mathcal{H}_\epsilon$  \defn{well-restricted} (see~\Cref{def:well_restricted}); the \defn{reflexponents}  $\{\epsilon_i\}_{i=1}^{n_\epsilon}$ for the orbit $\mathcal{H}_\epsilon$ are the fake degrees of $V_\epsilon$, while the \defn{co-reflexponents} $\{\epsilon_i^*\}_{i=1}^{n_\epsilon}$ are the fake degrees of $V_\epsilon^*$.   

\begin{theorem}
\label{thm:solomon2}
For $G$ a finite irreducible complex reflection group and $\mathcal{H}_\epsilon$ a well-restricted orbit of reflecting hyperplanes, there is a reindexing of the (co)exponents by $1,\ldots,n$ (with undefined (co)exponents taken to be zero), such that 
\begin{align*}
 \sum_{g \in G} \left(x/y\right)^{\mathcal{M}_{V_\epsilon}(g)}y^{\mathcal{M}_V(g)} &= \prod_{i=1}^n \Big(1+\epsilon_{i} x+(e_i-\epsilon_{i})y\Big),\\
 \sum_{g \in G} \mathrm{det}(g)\left(x/y\right)^{\mathcal{M}_{V_\epsilon}(g)}y^{\mathcal{M}_V(g)} &= \prod_{i=1}^n \Big(1-\epsilon^*_{i} x-(e_i^*-\epsilon_i^*)y\Big).
\end{align*}
\end{theorem}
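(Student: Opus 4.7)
My plan is to follow the paper's stated case-by-case methodology, verifying both identities across the Shephard--Todd classification of finite irreducible complex reflection groups. Two preliminary consistency specializations are worth recording up front: setting $x = y$ collapses each LHS to $\sum_{g \in G} y^{\mathcal{M}_V(g)}$ (respectively its $\det$-weighted form), while each RHS factor simplifies via $\epsilon_i y + (e_i - \epsilon_i) y = e_i y$, so \Cref{thm:solomon2} recovers \Cref{thm:solomon}; moreover, the two identities are formally related by $V \leftrightarrow V^*$, which swaps exponents with coexponents and reflexponents with co-reflexponents. These provide sanity checks and reduce the second identity to a parallel analysis of the first.

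For each group $G$, the strategy has four steps. First, make explicit the well-restricted representation $V_\epsilon$ and extract the reflexponents $\{\epsilon_i\}_{i=1}^{n_\epsilon}$ (and $\{\epsilon_i^*\}_{i=1}^{n_\epsilon}$) as the fake degrees of $V_\epsilon$ and $V_\epsilon^*$ respectively. Second, fix a reindexing by $\{1,\ldots,n\}$ that pairs each reflexponent with an exponent (padding with zeros); this pairing is part of the assertion and must be made explicit per case, presumably guided by a graded isotypic compatibility between $V_\epsilon$ and $V$. Third, decompose the LHS over conjugacy classes of $G$, computing $\mathcal{M}_V(g)$ and $\mathcal{M}_{V_\epsilon}(g)$ on class representatives (tracking $\det(g)$ for the second identity). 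Fourth, verify that the resulting polynomial in $x$ and $y$ matches the claimed product, either term-by-term or via a generating-function manipulation.

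For the infinite family $G(de,e,n)$ (with the excluded cases treated separately as noted in \Cref{sec:extensions}), I would exploit the wreath product structure: conjugacy classes are parameterized by multipartitions of $n$, and both $\mathcal{M}_V(g)$ and $\mathcal{M}_{V_\epsilon}(g)$ admit cycle-type formulas. The target factorization $\prod_i \bigl(1 + \epsilon_i x + (e_i - \epsilon_i) y\bigr)$ should emerge from an exponential-formula-type identity on the cycle-index generating function, with the $i$-th factor recording the joint contribution of a cycle of the appropriate length to $\mathcal{M}_V$ and $\mathcal{M}_{V_\epsilon}$. For the exceptional groups, which form a finite list, one verifies the identities directly from character tables, presumably with computer assistance.

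The principal obstacle is obtaining a uniform combinatorial proof of the factorization for $G(de,e,n)$ across all admissible orbits $\epsilon \in \{s,t,u\}$: each orbit yields a distinct $V_\epsilon$ whose fake degrees must be matched with a subset of the exponents, and the statistic $\mathcal{M}_{V_\epsilon}$ must be re-expressed in cycle data so that it interacts multiplicatively with $\mathcal{M}_V$ in a way compatible with the product. The $\det$-twisted identity compounds this with sign bookkeeping, and the reindexing pairing (how zeros are interleaved among the nonzero reflexponents) may itself require case-by-case choice before the factorization on the right-hand side becomes manifest.
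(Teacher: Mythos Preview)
Your plan is essentially the paper's own approach: a case-by-case verification over the Shephard--Todd classification, reducing to \Cref{thm:solomon} when there is a single hyperplane orbit, checking the exceptional groups by computer, and handling the infinite family via cycle combinatorics. The paper's execution for $G(a,1,n)$ with $\mathcal{H}_t$ and for $G(ab,b,n)$ with $\mathcal{H}_s$ is a direct Stirling-number generating-function manipulation (summing over cycle type, designated zero-sum cycles, and decoration choices, then invoking $\sum_i \S_i(n) q^i = \prod_{i=1}^{n-1}(1+iq)$) rather than a general exponential-formula argument, so you should expect to carry out those two computations explicitly rather than appeal to an abstract cycle-index identity.
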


\begin{example}
The dihedral group $G(2,1,2)=\langle s,t\rangle$ has four reflecting hyperplanes, four reflections $s,t,u=sts,v=tst$, and two orbits of reflecting hyperplanes.  Its eight elements are listed in~\Cref{fig:g212}.  Both of the orbits $\mathcal{H}_s$ and $\mathcal{H}_t$ are well-restricted, with corresponding one-dimensional representations \[V_{s}: s \mapsto -1, t \mapsto 1 \text{ and } V_{t}: s \mapsto 1, t \mapsto -1.\] 

As $G(2,1,2)$ has two (co)exponents $e_1=e_1^*=1$ and $e_2=e_2^*=3$, and a single (co)reflexponent $s_1=s^*_1=2=t_1=t_1^*$, we confirm that for $\epsilon \in \{s,t\}$:
\begin{align*}
 \sum_{g \in G(2,1,2)} \left(x/y\right)^{\mathcal{M}_{V_\epsilon}(g)}y^{\mathcal{M}_V(g)} = 1{+}2y{+}2x{+}2xy{+}y^2 &=(1+2x+y)(1+y),\\
 \sum_{g \in G(2,1,2)} \mathrm{det}(g)\left(x/y\right)^{\mathcal{M}_{V_\epsilon}(g)}y^{\mathcal{M}_V(g)} = 1{-}2y{-}2x{+}2xy{+}y^2 &=(1-2x-y)(1-y).
\end{align*}
\label{ex:g212}
\end{example}

\begin{figure}[htbp]
\[\begin{array}{c|c|c|c}
g &  \mathcal{M}_V(g) & \mathcal{M}_{V_s}(g) &  \mathcal{M}_{V_{t}}(g)\\\hline
e & 0 &  0 & 0  \\
sts & 1 & 0 & 1 \\
s  & 1  & 1 & 0 \\
st=us=vu =tv & 2& 1 & 1 \\ 
t & 1 &  0 & 1 \\
ut=sv=vs=tu&  2& 0 & 0 \\ 
ts=su=uv=vt & 2 & 1& 1  \\ 
v  & 1  & 1 & 0 \\ 
\end{array}\]
\caption{The eight elements of $G(2,1,2)$ described by reduced word in reflections, with the statistics $\mathcal{M}_V$, $\mathrm{det}$, $\mathcal{M}_{V_s}$, $\mathcal{M}_{V_t}$ used in~\Cref{ex:g212}.}
\label{fig:g212}
\end{figure}


In~\Cref{sec:extensions} we discuss generalizations of~\Cref{thm:solomon2} to non-well-restricted orbits.

\subsection{Exponents in Weyl groups}
Let now $\Phi$ be an irreducible crystallographic root system of rank $n$.  The \defn{exponents} $e_1 \leq e_2 \leq \cdots \leq e_n$ of $\Phi$ may be computed as the partition dual to the heights of the positive roots $\Phi^+$~\cite{kostant2009principal}.   When $G$ is a Weyl group, the exponents match the previous definition using fake degrees (and are equal to the coexponents).   The \defn{degrees} of $\Phi$ are then defined to be $d_i:=e_i{+}1$ for $1 \leq i \leq n$.

Let $W$ and $\widetilde{W}$ be the Weyl and affine Weyl groups associated to an irreducible crystallographic root system $\Phi$ of rank $n$.  For $w \in W$ or $w \in \widetilde{W}$, write $\ell(w)$ for the length of a reduced word for $w$ in simple reflections.  The following is well-known~\cite{humphreys1992reflection}.

\begin{theorem}
\label{thm:unweighted}
For an irreducible crystallographic root system $\Phi$,
\begin{align*}
\sum_{w \in W} q^{\ell(w)} = \prod_{i=1}^n \left(\frac{q^{d_i}-1}{q-1}\right) \text{ and }
\sum_{w \in \widetilde{W}} q^{\ell(w)} = (1-q)^{-n} \prod_{i=1}^n \left(\frac{q^{d_i}-1}{q^{e_i}-1}\right).
\end{align*}
\end{theorem}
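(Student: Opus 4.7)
The plan is to prove the two identities in turn. For the finite identity, my approach is to identify $W(q):=\sum_{w\in W} q^{\ell(w)}$ with the Hilbert series of the coinvariant algebra $C_W:=\mathbb{C}[V]/(\mathbb{C}[V]^W_+)$. By Chevalley--Shephard--Todd, $\mathbb{C}[V]^W$ is freely generated as a polynomial ring by $n$ homogeneous invariants of degrees $d_1,\ldots,d_n$, and a Koszul-complex argument then gives $\mathrm{Hilb}(C_W,q)=\prod_{i=1}^n (1-q^{d_i})/(1-q)$. Separately, Borel's realization of $C_W$ as the cohomology of the flag variety $G/B$---or, purely combinatorially, the Garsia--Stanton descent-monomial basis---exhibits a graded basis of $C_W$ indexed by $W$ in which the class attached to $w$ lies in degree $\ell(w)$. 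Matching Hilbert series on both sides yields the first identity.

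For the affine identity, the plan is to reduce to the finite one by a parabolic decomposition and then evaluate the remaining sum. Viewing $W$ as the standard parabolic of $\widetilde{W}$ generated by the non-affine simple reflections, every $\widetilde{w}\in\widetilde{W}$ factors uniquely as $\widetilde{w}=uv$ with $v\in W$ and $u$ the minimum-length representative of the coset $\widetilde{w}W$, and $\ell(\widetilde{w})=\ell(u)+\ell(v)$. Therefore
\[
\sum_{\widetilde{w}\in\widetilde{W}} q^{\ell(\widetilde{w})} \;=\; W(q)\cdot\sum_{u} q^{\ell(u)},
\]
summed over minimum-length coset representatives. Using the semidirect product $\widetilde{W}\cong W\ltimes Q^{\vee}$, these representatives are exactly the translations $t_\lambda$ with $\lambda\in Q^{\vee}$ antidominant, and $\ell(t_\lambda)=-2\langle\rho,\lambda\rangle$. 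It remains to establish the classical Bott--Macdonald identity
\[
\sum_{\lambda\in Q^{\vee}_{-}} q^{-2\langle\rho,\lambda\rangle} \;=\; \prod_{i=1}^n \frac{1}{1-q^{e_i}},
\]
after which substitution and algebraic rearrangement recover $(1-q)^{-n}\prod_i (q^{d_i}-1)/(q^{e_i}-1)$.

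The main obstacle is this last identity. All other ingredients---the invariant-theoretic Hilbert series, a Schubert/descent basis matching degree and length, parabolic length-additivity, and the identification of minimum-length coset representatives with antidominant translations---are structural facts about (affine) Coxeter groups. By contrast, the Bott--Macdonald identity is the combinatorial heart: it extracts the multiset of exponents $\{e_i\}$ from the generating function of antidominant lattice points weighted by $\rho$-height, and ultimately repackages Kostant's dual-partition theorem relating the heights of positive roots to the exponents. Proving it cleanly---for example by pairing coroot lattice points with reduced-word data, or by invoking the Weyl denominator identity after a suitable specialization---is where the real work lies.
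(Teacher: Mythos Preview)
The paper does not prove this theorem; it is stated as well-known with a citation to Humphreys, so there is no argument in the paper to compare against. Your finite-case strategy (coinvariant algebra plus a Schubert or descent-monomial basis) is standard and correct.

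Your affine argument has a concrete error. It is not true that the minimum-length representatives of $\widetilde{W}/W$ are exactly the antidominant translations. While $t_\lambda$ for $\lambda$ antidominant is indeed the shortest element of its coset $t_\lambda W$, the cosets $t_\mu W$ with $\mu$ not antidominant have minimum-length representatives that are not translations at all. Already in type $A_1$ the minimum-length representatives are $e,\,s_0,\,s_1s_0,\,s_0s_1s_0,\ldots$ of lengths $0,1,2,3,\ldots$, whereas the antidominant translations $e,\,s_1s_0,\,(s_1s_0)^2,\ldots$ have lengths $0,2,4,\ldots$; your displayed ``Bott--Macdonald identity'' would then read $1/(1-q^{2})=1/(1-q)$, which is false. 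The correct Bott formula is $\sum_u q^{\ell(u)}=\prod_{i}(1-q^{e_i})^{-1}$, summed over \emph{all} minimum-length coset representatives (equivalently, over alcoves in the dominant Weyl chamber); its standard proof passes through Kostant's theorem that the exponents are dual to the heights of the positive roots, not through a lattice-point sum over $Q^\vee_-$. With that correction your parabolic factorization and the final algebraic rearrangement go through.
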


Following~\cite{macdonald1972poincare}, our second result gives a weighted analogue of \Cref{thm:unweighted} that incorporates different lengths of roots.  Normalize $\Phi$ so that the short roots have length $1$ and the long roots length $r$.  Define the \defn{short exponents} $\epsilon_1 \leq \epsilon_2 \leq \cdots \leq \epsilon_{m}$ to be the partition dual to the heights of the short roots in the \emph{dual} root system, and define $\delta_i:=\epsilon_i{+}1$ to be the \defn{short degrees}.  We verify in~\Cref{thm:small_agree} that the reflexponents for the hyperplane orbit corresponding to the long roots of a Weyl group $W$ match the small exponents of $\Phi$.

Let $\inv(w)$ be the \defn{inversion set} of $w$, so that $\ell(w)=|\inv(w)|$.  We incorporate root lengths into $\ell(w)$ with the statistic $\Ell(w) :=\sum_{\alpha \in \inv(w)} \|\alpha\|^2.$ 


\begin{theorem}
\label{thm:weighted}
For an irreducible crystallographic root system $\Phi$,
\begin{align*} \sum_{w \in W} q^{\Ell_S(w)} &= \prod_{i=1}^{n_\epsilon}\left(\frac{1+q^{\delta_i}+\cdots+q^{\delta_i(r-1)}}{1+q+\cdots+q^{r-1}}\right)\prod_{i=1}^n \left(\frac{q^{d_i}-1}{q-1}\right),
\text{ and} \\
\sum_{w \in \widetilde{W}} q^{\Ell_S(w)} &= (1-q)^{-n}\prod_{i=1}^{n_\epsilon} \left(\frac{q^{\epsilon_i}-1}{q^{\epsilon_i r}-1}\right) \prod_{i=1}^n \left(\frac{q^{d_i}-1}{q^{e_i}-1}\right).
\end{align*} 
\end{theorem}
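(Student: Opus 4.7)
My plan is to verify the two identities case by case, following the weighted Poincar\'e-series approach of Macdonald~\cite{macdonald1972poincare}. Since the $W$-orbit of a root determines its squared norm, I can decompose $\Ell_S(w) = \ell_s(w) + r^2 \ell_\ell(w)$, where $\ell_s(w)$ and $\ell_\ell(w)$ count the short and long inversions of $w$ respectively. For the simply-laced types $A_n, D_n, E_6, E_7, E_8$ we have $r = 1$ and $n_\epsilon = 0$, so both identities collapse to \Cref{thm:unweighted} and there is nothing to check. The remaining cases are $B_n, C_n, F_4, G_2$.

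For the finite identity, I would work with the bivariate Poincar\'e series
\[P_W(x, y) := \sum_{w \in W} x^{\ell_s(w)} y^{\ell_\ell(w)}\]
and prove a product factorization that specializes at $(x, y) = (q, q^{r^2})$ to the claimed formula. Such a factorization can be obtained by grouping elements according to their minimal-length coset representatives in a well-chosen chain of parabolic subgroups (so that each successive quotient contributes a single factor on the right), or simply by comparing coefficient-by-coefficient against published tables of exponents and short exponents---both routes amount to finite checks for $B_n, C_n, F_4, G_2$.

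For the affine identity, my plan is to exploit the semidirect decomposition $\widetilde{W} \cong W \ltimes Q^\vee$: each $\widetilde{w}$ factors uniquely as $w \cdot t_\lambda$ with $w \in W$ and $\lambda \in Q^\vee$, and the inversion set of $\widetilde{w}$ splits accordingly into a finite part from $w$ and a translation part whose short and long contributions can be read off from the coroot heights involved. Summing first over $\lambda$ yields a theta-function-style lattice sum, which by a suitably weighted variant of the Macdonald identity relating theta functions of $\Phi$ and $\Phi^\vee$ should collapse to a product involving the short and long degrees. Combining this lattice sum with the finite identity from the previous step then produces the asserted product.

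The main obstacle will be the affine step: the inversion set of a translation $t_\lambda$ depends on which chamber of the coroot lattice $\lambda$ lies in, and correctly bookkeeping which contributions are short versus long---so that the mysterious denominator $1-q^{\epsilon_i r}$ appears rather than $1-q^{e_i}$---is the subtle point. Case-by-case, the most delicate verification is $G_2$ (where $r = \sqrt{3}$ forces $r^2 = 3$ into the exponents asymmetrically) and the dual pair $B_n / C_n$ (where short and long exponents interchange under passage to $\Phi^\vee$); once these and $F_4$ are checked, the simply-laced reduction disposes of the remaining types.
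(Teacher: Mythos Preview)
Your overall strategy---reduce to the non-simply-laced types and verify case by case via Macdonald's weighted Poincar\'e series machinery---is the right one, and is essentially what the paper does.  But the paper's execution is much more direct than yours, and your affine plan in particular has a gap.

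For the finite case, rather than building the bivariate series $P_W(x,y)$ and factoring it through a chain of parabolics, the paper simply invokes Macdonald's Theorem~2.4, which already gives a closed product
\[
\Ell_S(W;q)=\prod_{\alpha\in\Phi^+}\frac{q^{\|\alpha\|^2+\Hgt(\alpha)}-1}{q^{\Hgt(\alpha)}-1},
\qquad \Hgt(\alpha)=\sum_i a_i\|\alpha_i\|^2,
\]
for \emph{any} choice of weights on the simple roots. All that remains is the finite check that this product rearranges into the stated form with the $\delta_i$'s, which is immediate type by type.  Your parabolic-chain idea would work, but it re-proves a special case of what Macdonald already packaged.

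For the affine case, your proposed route through $\widetilde{W}\cong W\ltimes Q^\vee$ and a lattice theta sum is exactly the content of Macdonald's Theorem~3.3, and that theorem is not a short computation.  The inversion set of $w\cdot t_\lambda$ does \emph{not} split cleanly into a ``finite part from $w$'' plus a ``translation part''---the affine inversions intertwine $w$ and $\lambda$---so the bookkeeping you flag as ``the subtle point'' is precisely the hard work.  The paper sidesteps all of this by citing Theorem~3.3 directly and then doing the same kind of elementary product rearrangement as in the finite case.  As written, your affine argument is a sketch of how one might re-derive Macdonald's result, not a proof of the theorem.

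One notational slip: in the paper's normalization, short roots have squared length~$1$ and long roots squared length~$r$ (so $r=2,2,2,3$ for $B_n,C_n,F_4,G_2$).  Hence $\Ell_S(w)=\ell_s(w)+r\,\ell_\ell(w)$, not $\ell_s(w)+r^2\ell_\ell(w)$; your ``$r=\sqrt{3}$ for $G_2$'' is a misreading of the convention.
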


\begin{example}
The Weyl group of type $C_2$ has eight elements, listed in~\Cref{fig:g212}.  Here $\alpha_2$ and $\alpha_1+\alpha_2$ are short (of length 1), while $\alpha_1$ and $\alpha_1+2\alpha_2$ are long (of length 2), so that $r=2$.   As that $n=2$, $d_1=2$, and $d_2=4$, while $n_\epsilon=1$, $\epsilon_1=2$ and $\delta_1=3$, we confirm using \Cref{fig:c2} and \Cref{fig:c22} that
\begin{align*}
\sum_{w \in W} q^{\Ell_S(w)}=q^6 {+} q^5 {+} q^4 {+} 2q^3 {+} q^2 {+} q {+} 1 &= \left(\frac{1+q^3}{1+q}\right) \left(\frac{1-q^2}{1-q}\right)\left(\frac{1-q^4}{1-q}\right),
\\
\frac{\sum_{w \in \widetilde{W}} q^{\Ell_S(w)}}{\sum_{w \in W} q^{\Ell_S(w)}}=1{+}q{+}q^3{+}2q^4{+}q^5{+}q^6{+}\scalebox{0.5}{$\cdots$}&=\left(\frac{q^2 - 1}{q^4 - 1}\right)\left(\frac{1}{q - 1}\right)\left(\frac{1}{q^3 - 1}\right).
\end{align*}
\label{ex:c2}
\end{example}

\begin{figure}[htbp]
\[\begin{array}{l|c|r|r}
w & \inv(w) & \ell_S(w) &  \Ell_S(w) \\\hline
e &\emptyset & 0 & 0\\
sts & \alpha_1, \alpha_1 + \alpha_2, \alpha_1 + 2\alpha_2 & 3 & 5 \\
s & \alpha_1 & 1  & 2\\
st & \alpha_2, \alpha_1 + 2\alpha_2 & 2 & 3\\
t & \alpha_2 & 1 &  1\\
stst  &\alpha_1, \alpha_1 + 2\alpha_2, \alpha_1 + \alpha_2, \alpha_2 & 4 &6\\
ts & \alpha_1, \alpha_1 + \alpha_2 &2 & 3\\
tst  & \alpha_2, \alpha_1 + 2\alpha_2, \alpha_1 + \alpha_2  & 3 &4\\
\end{array}\]
\caption{The elements of the Weyl group of type $C_2$, described by reduced word in simple reflections and inversion set, with the statistics $\ell_S$ and $\Ell_S$.}
\label{fig:c2}
\end{figure}

\begin{figure}[htbp]
\begin{center}
\scalebox{0.8}{
\begin{tikzpicture}[scale=2]

\draw[-,black,thin] (-1,0) -- (5,0);
\node at (-1.3,0) (010) {$H_{\alpha_1,0}$};
\draw[-,black,thin] (1,1) -- (5,1);
\node at (5.3,1) (021) {$H_{\alpha_1,1}$};
\draw[-,black,thin] (2,2) -- (5,2);
\node at (5.3,2) (022) {$H_{\alpha_1,2}$};

\draw[-,black,thin] (-1,-1) -- (3.53,3.53);
\node at (-1.1,-1.1) (100) {$H_{\alpha_2,0}$};
\draw[-,black,thin] (2,0) -- (5,3);
\node at (5.1,3.1) (101) {$H_{\alpha_2,1}$};

\draw[-,black,thin] (1,1) -- (1,-.1);
\node at (1,-.3) (211) {$H_{\alpha_1+2\alpha_2,1}$};
\draw[-,black,thin] (2,2) -- (2,-.1);
\node at (2,-.3) (212) {$H_{\alpha_1+2\alpha_2,2}$};
\draw[-,black,thin] (3,3) -- (3,-.1);
\node at (3,-.3) (213) {$H_{\alpha_1+2\alpha_2,3}$};

\draw[-,black,thin] (2,0) -- (1-.1,1+.1);
\node at (1-.2,1+.2) (111) {$H_{\alpha_1+\alpha_2,1}$};
\draw[-,black,thin] (4,0) -- (2-.1,2+.1);
\node at (2-.2,2+.2) (112) {$H_{\alpha_1+\alpha_2,2}$};

 \draw [thick, draw=black, fill=gray, fill opacity=0.2]
       (0,0) -- (1,0) -- (1,1) -- cycle;
\filldraw (0,0) circle (.05);
\node at (.33,.11) {$0$};
\node at (.11,.33) {$1$};
\node at (-.11,.33) {$3$};
\node at (-.33,.11) {$4$};
\node at (.33,-.11) {$2$};
\node at (.11,-.33) {$3$};
\node at (-.11,-.33) {$5$};
\node at (-.33,-.11) {$6$};
\node at (1.67,.11) {$1$};
\node at (3.67,.11) {$7$};
\node at (3.67,1.89) {$8$};
\node at (1.89,.33) {$3$};
\node at (2.11,.33) {$4$};
\node at (2.33,.11) {$6$};
\node at (1.89,1.66) {$4$};
\node at (2.11,1.66) {$5$};
\node at (2.33,1.89) {$7$};
\node at (2.33,2.11) {$8$};
\draw[-,black,thin] (0,-1) -- (0,1);
\node at (0,-1.2) (210) {$H_{\alpha_1+2\alpha_2,0}$};
\draw[-,black,thin] (1,-1) -- (-1,1) node[above] {$H_{\alpha_1+\alpha_2,0}$};

\end{tikzpicture}}
\end{center}
\caption{Some alcoves for the affine Weyl group of type $C_2$; the alcoves touching the origin (marked as a black dot) correspond to the elements of the finite Weyl group.  Alcoves are labeled by the sum of the lengths of the roots indexing the hyperplanes separating them from the fundamental alcove (marked as a gray triagle); under the correspondence between alcoves and elements of $\widetilde{C}_2$, this matches the statistic $\Ell_S$.}
\label{fig:c22}
\end{figure}

We give an application of~\Cref{thm:weighted} to twisted Chevalley groups in~\Cref{cor:chevalley}.  In~\Cref{sec:extensions} we discuss extensions of~\Cref{thm:weighted} beyond Weyl groups. 

\section{Numerology}
\label{sec:num}


\subsection{Complex reflection groups and degrees}

Let $V$ be a complex vector space of dimension $n$.  A \defn{complex reflection group} $G$ is a finite subgroup of $\mathrm{GL}(V)$ that is generated by reflections.  A complex reflection group $G$ is called \defn{irreducible} if $V$ is irreducible as a $G$-module; $V$ is then called the \defn{reflection representation} of $G$.  For the remainder of the paper, we fix $G$ an irreducible complex reflection group.

Let $\mathbb{C}[V^*]$ be the symmetric algebra on the dual vector space $V^*$, and write $\mathbb{C}[V^*]^G$ for its $G$-invariant subring.  By a classical theorem of Shephard-Todd~\cite{shephard1954finite} and Chevalley~\cite{chevalley1955invariants}, $G$ a subgroup of $\mathrm{GL}(V)$ is a complex reflection group if and only if $\mathbb{C}[V^*]^G$ is a polynomial ring.  For $G$ a complex reflection group, the ring $\mathbb{C}[V^*]^G$ is generated by $n$ algebraically independent polynomials---the \defn{degrees} of these polynomials are invariants of $G$, denoted $d_1\leq \cdots \leq d_n.$  

The \defn{coinvariant algebra} is the quotient $\mathbb{C}[V^*]_G:=\mathbb{C}[V^*]/\mathbb{C}[V^*]_+^G$, where $\mathbb{C}[V^*]_+^G$ is the ideal generated by all $G$-invariants with no constant term.  As an ungraded $G$-representation, the coinvariant algebra is isomorphic to the regular representation of $G$, and its Hilbert series is given by \[\sum_{i\geq 0} \mathrm{dim}\mathbb{C}[V^*]_G^i= \prod_{i=1}^n \frac{q^{d_i}-1}{q-1}.\]   In particular, we have the numerology $|G|=\prod_{i=1}^n d_i$.

\subsection{Exponents}
 The \defn{fake degrees} of an irreducible representation $U$ of $G$ are the degrees in which $U$ appears in $\mathbb{C}[V^*]_G$.  The fake degrees for the reflection representation $V$ are the \defn{exponents} of $G$, satisfying $e_i+1=d_i$.  The fake degrees for its complex conjugate $V^*$ are the \defn{co-exponents}.   Together, these satisfy the numerology \begin{equation}\sum_{i=1}^n e_i = \left|\mathcal{R}\right| \text{ and } \sum_{i=1}^n e_i^* = \left|\mathcal{H}\right|,\label{eq:exp}\end{equation} where $\mathcal{H}$ is the set of reflecting hyperplanes of $G$ and $\mathcal{R}$ is its set of reflections.

In~\cite{solomon1963invariants}, Solomon gave a uniform proof of Shephard-Todd's \Cref{thm:solomon}, showing that the (co)exponents could be read directly from the group~\cite{shephard1954finite}. 


\subsection{Reflexponents}
\label{sec:reflex}

Recall that an irreducible complex reflection group $G$ can be generated by $n$ or $n{+}1$ reflections; $G$ is said to be \defn{well-generated} if it can be generated by $n$ reflections.  Call $S \subseteq \mathcal{R}$ a \defn{minimal reflection generating set} if $S$ is a generating set of reflections of minimal size (among generating sets of reflections).

The reflecting hyperplanes $\mathcal{H}$ of the group $G$ are broken into at most three $G$-orbits $\mathcal{H}/G=\{\mathcal{H}_\epsilon\}_{\epsilon \in \{s,t,u\}}$ (where we use the indexing from~\cite{micheltable}).  Write $\mathcal{R}_\epsilon$ for the set of reflections whose reflecting hyperplane lies in a fixed orbit $\mathcal{H}_\epsilon \in \mathcal{H}/G$.   For a minimal reflection generating set $S$, set $n^S_{\epsilon}=|S\cap \mathcal{R}_\epsilon|$.

\begin{fact} For any two minimal reflection generating sets $S$ and $T$, we have $n^S_\epsilon=n^T_\epsilon$ for each hyperplane orbit $\mathcal{H}_\epsilon \in \mathcal{H}/G$.
\label{prop:minimal}
\end{fact}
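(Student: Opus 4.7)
The plan is to verify the fact case-by-case using the Shephard--Todd classification of irreducible complex reflection groups, in keeping with the paper's stated methodology. These groups split into the infinite family $G(de,e,n)$ of imprimitive groups and the $34$ exceptional groups $G_4,\ldots,G_{37}$, which we treat separately.

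For the imprimitive family $G(de,e,n)$, we argue by direct inspection. If $G$ has only a single orbit of reflecting hyperplanes---as happens for $G(1,1,n)=S_n$, or for $G(e,e,n)$ with $e>1$ and $n>2$---the claim is vacuous. When $d>1$, the hyperplanes split into the diagonal orbit (order-$d$ reflections in coordinate hyperplanes $x_i=0$) and the ``binomial'' orbit (order-$2$ reflections in hyperplanes $x_i-\zeta x_j=0$). Using the explicit description of minimal reflection generating sets from the Broué--Malle--Rouquier presentation, one checks that any such generating set meets each orbit in a fixed number of reflections. The main obstacle arises in the non-well-generated case $d,e>1$, $n\geq 2$, where minimal reflection generating sets have $n+1$ elements rather than $n$; here one must argue that the ``extra'' reflection always belongs to a prescribed orbit. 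A useful tool is the normal reflection subgroup $W_\epsilon=\langle \mathcal{R}_\epsilon\rangle$ together with the quotient $G/W_\epsilon$, which is itself a complex reflection group of smaller rank whose own minimal reflection generating sets can be read off inductively.

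For the $34$ exceptional groups we consult the tables of Michel~\cite{micheltable}, which list all conjugacy classes of minimal reflection generating sets, and verify the claim by inspection.

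A more conceptual reason for the invariance is available in the well-generated case: by Bessis's theorem on the Hurwitz action, any minimal reflection generating set of a well-generated irreducible complex reflection group is Hurwitz-equivalent to a standard simple system, and each Hurwitz move $(\ldots,s,t,\ldots)\mapsto(\ldots,sts^{-1},s,\ldots)$ replaces a generator by a $G$-conjugate. Since conjugate reflections lie in the same hyperplane orbit, $n^S_\epsilon$ is preserved along Hurwitz moves, giving invariance. Extending this uniform argument to the non-well-generated groups remains the main obstacle, and is why we ultimately resort to a case-by-case verification.
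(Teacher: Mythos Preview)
Your overall strategy---case-by-case verification---matches the paper's, but two of the three legs you stand on do not bear weight.

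First, the claim that Michel's tables list ``all conjugacy classes of minimal reflection generating sets'' is incorrect: those tables record the standard Brou\'e--Malle--Rouquier presentation (one generating set) together with hyperplane-orbit data, not a classification of all minimal reflection generating sets. The paper instead invokes Shi's classification (surveyed in~\cite{shi2007presentations}) of congruence classes of minimal reflection generating sets for $G(a,1,n)$, $G(ab,b,n)$, $G_7$, $G_{11}$, $G_{13}$, $G_{15}$, $G_{19}$, $G_{26}$, disposes of the well-generated rank-two groups by inspection, and checks $G_{28}$ by computer. Your imprimitive argument has the same defect: the BMR presentation gives \emph{one} minimal generating set, not all of them, so ``one checks'' is not a proof. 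The quotient idea you sketch is suggestive but not developed into an argument.

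Second, the Hurwitz argument is not correct as stated. Bessis's transitivity theorem concerns the Hurwitz action on \emph{reduced reflection factorizations of a fixed Coxeter element}, not on minimal reflection generating sets. Hurwitz moves preserve the product $r_1\cdots r_n$, so two minimal generating tuples can be Hurwitz-equivalent only if some orderings of them share the same product; there is no reason an arbitrary minimal generating set multiplies (in any order) to a Coxeter element. Thus Bessis's theorem does not yield the invariance of $n^S_\epsilon$ even in the well-generated case without substantial additional work.
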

\begin{proof}
There is nothing to show when there is only one hyperplane orbit.  The fact is immediate for the well-generated two-dimensional complex reflection groups $G(2b,2b,2)$, $G_5$, $G_6$, $G_9$, $G_{10}$, $G_{14}$, $G_{17}$, $G_{18}$, and $G_{21}$.  Shi and his students have classified (congruence classes of) all minimal reflection generating sets for $G(a,1,n)$, $G(ab,b,n)$, $G_7$, $G_{11}$, $G_{13}$, $G_{15}$, $G_{19}$, and $G_{26}$ (this work is surveyed in~\cite{shi2007presentations}), from which the fact follows.  We checked the fact for $G_{28}$ by computer.   
\end{proof}

By \Cref{prop:minimal}, the number of generators intersecting the hyperplane orbit $\mathcal{H}_\epsilon$ doesn't depend on the chosen minimal reflection generating set---we therefore denote this number by $n_\epsilon$.

A \defn{parabolic subgroup} of $G$ is a subgroup fixing pointwise some subset $V'$ of $V$; its \defn{dimension} is $\dim V-\dim V'$.  When $W$ is not-well-generated, it can happen that any parabolic subgroup generated by reflections from $\mathcal{R}_\epsilon$ has dimension strictly less than $n_\epsilon$ (this happens, for example, for a certain orbit in $G_{13}$).  The following definition excludes such cases, which are revisited in~\Cref{sec:extensions}.
\begin{definition}
\label{def:well_restricted}
We say that $\mathcal{H}_\epsilon$ is \defn{well-restricted} if there is some parabolic subgroup generated by a subset of $\mathcal{R}_\epsilon$ that is minimally generated by $n_\epsilon$ reflections.
\end{definition}

We have the following characterization for when $\mathcal{H}_\epsilon$ is well-restricted.
\begin{fact}
\label{prop:well-restricted}
For a finite irreducible complex reflection group $G$, $\mathcal{H}_\epsilon$ is well-restricted \emph{except} if
\begin{itemize}
\item $G=G(ab,b,n)$ with $a, b > 1$, $n>2$, and $\epsilon = t$, or
\item $G=G_{13}$ and $\epsilon = t$.
\end{itemize}
\end{fact}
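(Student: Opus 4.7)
The plan is to proceed case-by-case along the Shephard--Todd classification, mirroring the paper's broader strategy. Any $G$ with a single $G$-orbit of reflecting hyperplanes is immediate: the parabolic $P = G$, being the pointwise stabilizer of $\{0\}$, is generated by $\mathcal{R}_\epsilon = \mathcal{R}$ and minimally generated by $n_\epsilon \in \{n, n+1\}$ reflections, so $\mathcal{H}_\epsilon$ is trivially well-restricted. This disposes of $G(m,m,n)$ for $n\ge 3$, all well-generated single-orbit exceptionals, and the non-well-generated single-orbit exceptionals $G_{12}, G_{22}, G_{31}$.

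For multi-orbit $G$, the strategy is to exhibit for each orbit $\mathcal{H}_\epsilon$ a witness parabolic subgroup supported on $\mathcal{R}_\epsilon$ and minimally generated by $n_\epsilon$ reflections. The positive cases are handled constructively. In $G(m,1,n)$ with $m > 1$, the diagonal orbit has $n_\epsilon = 1$ (witnessed by a cyclic subgroup) and the permutation orbit has $n_\epsilon = n{-}1$ (witnessed by the Young subgroup $S_n$); in the dihedral groups $G(2b,2b,2)$ each orbit is witnessed by a cyclic subgroup; and for the multi-orbit well-generated exceptionals ($G_5, G_6, G_9, G_{10}, G_{14}, G_{17}, G_{18}, G_{21}, G_{26}, G_{28}, G_{29}, G_{33}, G_{34}$) the parabolic subgroup lattice in Michel's tables (or a CHEVIE computation) furnishes the required witness on each orbit.

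The substantive claim is the obstruction for the two asserted exceptions. For $G(ab,b,n)$ with $a,b>1$ and $n>2$, the Brou\'e--Malle--Rouquier presentation yields a minimal reflection generating set $\{t, s_2', s_2, s_3, \ldots, s_n\}$ consisting of one diagonal-type reflection $t$ and $n$ permutation-type reflections; by \Cref{prop:minimal} this forces $n_s = 1$ and $n_t = n$. The $s$-orbit is well-restricted via $\langle t \rangle$. For the $t$-orbit, I would invoke Steinberg's theorem to argue that any parabolic generated by permutation-type reflections is the pointwise stabilizer of a coordinate-flag subspace (a simultaneous equalization of blocks of coordinates) and so is isomorphic to a Young subgroup $S_{k_1} \times \cdots \times S_{k_r}$ with $\sum k_j \le n$. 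Such a Young subgroup has a minimal reflection generating set of size $\sum_j (k_j - 1) \le n - 1$, strictly less than $n_t = n$, so no witness exists and $\mathcal{H}_t$ fails to be well-restricted.

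The remaining obstacle is the finite check for $G_{13}$ and the positive verification for the other small non-well-generated exceptionals. For $G_7, G_{11}, G_{15}, G_{19}$ each orbit should give $n_\epsilon = 1$, witnessed by the cyclic subgroup generated by a single reflection from that orbit. The group $G_{13}$ has three hyperplane orbits; here I would enumerate its parabolic subgroups directly---most efficiently by computer algebra, paralleling the $G_{28}$ check cited in the proof of \Cref{prop:minimal}---to confirm that exactly the orbit indexed $\epsilon = t$ in Michel's convention admits no parabolic with the correct minimal generator count. I expect the main conceptual obstacle to be the Young-subgroup classification in $G(ab,b,n)$, since the other steps are either cyclic witnesses or a mechanical lookup; the $G_{13}$ step is purely computational once the parabolic subgroups are catalogued.
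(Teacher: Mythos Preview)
Your proposal is correct and follows essentially the same case-by-case route as the paper. For the $G(ab,b,n)$ obstruction the paper argues from the other direction, citing Taylor's classification of parabolic subgroups to show that every parabolic minimally generated by $n$ reflections is conjugate to $G(ab,b,n-1)$ and hence contains a diagonal reflection from $\mathcal{R}_s$; your contrapositive argument via Steinberg (that a parabolic containing only $\mathcal{R}_t$-reflections is a Young subgroup, hence needs at most $n-1$ generators) is equally valid and arguably more direct. Two minor factual slips to clean up: $G_{29}$, $G_{33}$, $G_{34}$ each have a single hyperplane orbit, so they belong in your first paragraph rather than your list of multi-orbit exceptionals; and $G_{13}$ has only two hyperplane orbits (in the paper's convention $\mathcal{H}_t = \mathcal{H}_u$), with $n_t = 2$ and no rank-two parabolic available.
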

\begin{proof}
Fix $S$ the the generating set given in~\cite{micheltable} (see also~\cite[Appendix 2]{broue1997complex}).

In each exceptional case except $G_{13}$, the intersection $S \cap \mathcal{R}_\epsilon$ generates the desired parabolic subgroup.  For $G_{13}$ with $\epsilon=t$ (or $u$), $n_t=2$, but $G_{13}$ has no parabolic subgroup that is minimally generated by two reflections.

Consider now $G(ab,b,n)$.  If $a=1$ then there is only one orbit of hyperplanes unless $n=2$ and $b$ is even, and we are in the case of dihedral groups---but then both $\mathcal{H}_s$ and $\mathcal{H}_t$ are well-restricted.

Otherwise $a>1$.  If $\epsilon=s$, then $\mathcal{R}_s$ consists of the $n$ diagonal matrices with $n-1$ ones on the diagonal along with a single primitive $a$th root of unity.  Any one of these generates a parabolic subgroup isomorphic to $G(a,1,1)\cong \mathbb{Z}_a$.

If $\epsilon=t$, then either $b=1$ or $b>1$.  If $b=1$, then $G(a,1,n)$ is well generated, $n_t=n-1$ and the group generated by $\{t_2,\ldots,t_n\}\subset \mathcal{R}_t$ is a parabolic subgroup (see~\cite[Theorem 4.2]{taylor2012reflection}), isomorphic to the symmetric group $G(1,1,n)\cong \mathfrak{S}_n$.  If $b>1$, $n_t=n$---but the group generated by $\{t_2,t_2',\ldots,t_n\}\subset \mathcal{R}_t$ is now \emph{not} parabolic (it is isomorphic to $G(ab,ab,n)$).  In fact, it follows from the characterization of parabolic subgroups of $G(ab,b,n)$ given in~\cite[Theorem 3.11]{taylor2012reflection} that the only parabolic subgroups of $G(ab,b,n)$ that are minimally generated by $n$ reflections are conjugate to the reflection subgroup generated by $\{s,t_2,t_2',\ldots,t_{n-1}\}$, isomorphic to $G(ab,b,n-1)$.  In particular, any parabolic subgroup of $G(ab,b,n)$ minimally generated by $n$ reflections uses a reflection from the conjugacy class $\mathcal{R}_s$.
\end{proof}
Note that all orbits are well-restricted when $G$ is well-generated.

\begin{fact}
When $\mathcal{H}_\epsilon$ is well-restricted, the parabolic subgroup $G_\epsilon$ is unique up to conjugacy.  Furthermore, there is an $n_\epsilon$-dimensional irreducible representation $V_\epsilon$ of $G$ supported on $\mathcal{R}_\epsilon$, whose restriction to $G_\epsilon$ is the reflection representation.
\end{fact}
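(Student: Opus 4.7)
The plan is a case-by-case verification following the Shephard--Todd classification. By \Cref{prop:well-restricted}, the well-restricted orbits split into two families: (a) orbits $\mathcal{H}_\epsilon$ in a well-generated $G$, and (b) the orbit $\mathcal{H}_s$ in $G(ab,b,n)$ with $a > 1$. I would handle the two parts of the claim (uniqueness and existence) separately in each family, using the explicit generating sets and character tables from Michel's tables.

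\emph{Uniqueness up to conjugacy.} In the well-generated case, fix a minimal reflection generating set $S$ as in the proof of \Cref{prop:minimal}. The subset $S \cap \mathcal{R}_\epsilon$ generates a parabolic subgroup of dimension $n_\epsilon$. That any two parabolic subgroups minimally generated by $n_\epsilon$ reflections from $\mathcal{R}_\epsilon$ are conjugate follows from the classification of parabolic subgroups of complex reflection groups: \cite[Theorem 3.11]{taylor2012reflection} handles $G(ab,b,n)$, and for the remaining exceptional well-generated groups the parabolics are classified and can be checked directly using the tables surveyed in \cite{shi2007presentations}. For $G(ab,b,n)$ with $\epsilon = s$, all reflections of $\mathcal{R}_s$ are conjugate, each one alone generates a parabolic isomorphic to $\mathbb{Z}_a$, and all such parabolics are therefore conjugate.

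\emph{Existence of $V_\epsilon$.} I would exhibit the representation explicitly in each case. For $G(a,1,n)$ with $\epsilon = t$, $G_t \cong \mathfrak{S}_n$ sits as the permutation-matrix copy, and $V_t$ is the $(n-1)$-dimensional irreducible $G(a,1,n)$-representation obtained by pulling back the reflection representation of $\mathfrak{S}_n$ along the natural surjection $G(a,1,n) \twoheadrightarrow \mathfrak{S}_n$. For $G(ab,b,n)$ with $\epsilon = s$, $n_s = 1$ and $V_s$ is the linear character sending a generalized permutation matrix to the product of its nonzero entries; this restricts to a faithful character on the diagonal $\mathbb{Z}_a = G_s$. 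For the exceptional well-generated groups with multiple hyperplane orbits (the two-dimensional cases, $G_{26}$, $G_{28} = F_4$, together with the real Coxeter groups $B_n$ and $I_2(2b)$), one locates the desired $n_\epsilon$-dimensional irreducible in the character table and verifies the restriction identity by evaluating its character on the reflections of $G_\epsilon$ and on a non-reflection witness.

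The main obstacle is the exceptional analysis, especially $G_{26}$ and $G_{28}$: each has several irreducibles of dimension $n_\epsilon$, and the candidate for $V_\epsilon$ must be pinned down by the restriction identity. Fortunately this is a finite character-table computation once the standard reflection matrices for $G$ and the generators of the parabolic $G_\epsilon$ have been fixed. A minor conceptual point is interpreting ``supported on $\mathcal{R}_\epsilon$''; I read this as requiring that reflections outside $\mathcal{R}_\epsilon$ contribute trivially (via $\mathcal{M}_{V_\epsilon}=0$) in the sense needed for \Cref{thm:solomon2}, which is automatic once the restriction to $G_\epsilon$ is the reflection representation of $G_\epsilon$.
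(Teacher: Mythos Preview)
Your overall strategy---case-by-case verification via the Shephard--Todd classification---is exactly the paper's, and your existence arguments for $V_\epsilon$ (the $\mathfrak{S}_n$-quotient for $G(a,1,n)$ with $\epsilon=t$, the product-of-entries character for $\mathcal{H}_s$ in $G(ab,b,n)$, character-table checks for $G_{26}$ and $G_{28}$) coincide with what the paper does.

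There is, however, a gap in your case enumeration. \Cref{prop:well-restricted} lists the \emph{non}-well-restricted orbits; the complement is not exhausted by your families (a) and~(b). You have omitted the well-restricted orbits in the non-well-generated exceptional groups $G_7$, $G_{11}$, $G_{13}$ (for $\epsilon=s$), $G_{15}$, and $G_{19}$. These are all rank-two groups minimally generated by three reflections, and every well-restricted orbit in them has $n_\epsilon=1$, so they are easy to dispatch---but your argument as written does not reach them, and the references to \cite{shi2007presentations} or \cite{taylor2012reflection} you invoke under heading~(a) are framed for the well-generated case only.

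The paper's organization sidesteps this by partitioning on $n_\epsilon$ rather than on well-generatedness: whenever $n_\epsilon=1$, the parabolic $G_\epsilon$ is the pointwise stabilizer of a single hyperplane in $\mathcal{H}_\epsilon$, hence unique up to conjugacy trivially. That single observation handles all rank-two exceptional groups (well-generated or not), the orbit $\mathcal{H}_s$ in every $G(ab,b,n)$, and the orbit $\mathcal{H}_s$ in $G_{26}$. Only $G(a,1,n)$ with $\epsilon=t$, $G_{26}$ with $\epsilon=t$, and $G_{28}$ then remain, and for $G_{28}$ the paper appeals to the fact that in a real reflection group every parabolic is conjugate to a standard one. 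Reorganizing your uniqueness argument along these lines both closes the gap and avoids the heavier external classification results.
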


\begin{proof}
Uniqueness up to conjugacy is immediate when $\mathcal{H}_\epsilon$ is the conjugacy class of a single reflection.  The result follows for the group $G_{28}$ because it is the complexification of a real reflection group (all parabolic subgroups are conjugate to a standard parabolic subgroup); it follows by inspection for the group $G_{26}$ and $G(a,1,n)$.

We turn now to the existence of the $n_\epsilon$-dimensional irreducible representations.  The one-dimensional representations are simple to construct.  The existence of the two-dimensional irreducible representations were confirmed for $G_{26}$ and $G_{28}$ using~\cite{MR99m:20017}.  We produce the representation for $G(a,1,n)$ with $a>1$ and the orbit $\mathcal{H}_t$ using the reflection representation on the copy of the symmetric group $G(1,1,n)$ inside $G(a,1,n)$ generated by $\{t_2,\ldots,t_n\}$.
\end{proof}

As in the introduction, we define the \defn{reflexponents} $\{\epsilon_i\}_{i=1}^{n_\epsilon}$ to be the ${n_\epsilon}$ fake degrees of $V_\epsilon$, and the \defn{co-reflexponents} $\{\epsilon_i^*\}_{i=1}^{n_\epsilon}$ to be the fake degrees of $V_\epsilon^*$.  These are listed in~\Cref{fig:table} (the fake degrees for the infinite family were determined in~\cite{malle1995unipotente}).  An analogue of~\Cref{eq:exp} holds for (co)reflexponents.

\begin{figure}[htbp]
\[\scalebox{1}{$\begin{array}{cccccc}
G &  e_i/ e_{n-i}^* & s_i/s_{n_s-i}^* & t_i/t^*_{n_t-i} & u_i/u^*_{n_u-i} \\ \hline
\begin{array}{c}G(a,1,n)\\ \scalebox{0.8}{$a{>}1$}\end{array} &  \scalebox{0.8}{$\begin{array}{c} a{-}1,\scalebox{0.5}{$\ldots$},na{-}1 \\ (n{-}1)a{+}1 ,\scalebox{0.5}{$\ldots$},1\end{array}$} &   \begin{array}{c} (a{-}1)n \\ n \end{array} & \scalebox{0.8}{$\begin{array}{c} a,\scalebox{0.5}{$\ldots$},(n{-}1)a \\ (n{-}1)a,\scalebox{0.5}{$\ldots$},a \end{array}$} & \\ \hline
\begin{array}{c}G(m,b,n)\\ \scalebox{0.8}{$\text{write } m=ab$}\\\scalebox{0.8}{$d=\gcd(a,b)$}\\ \scalebox{0.8}{$a,b{>}1,n{>}2$} \end{array}  & \scalebox{0.8}{$\begin{array}{c} m{-}1,\scalebox{0.5}{$\ldots$},(n{-}1)m{-}1,na{-}1 \\ 1,m{+}1,\scalebox{0.5}{$\ldots$},(n{-}1)m{+}1 \end{array}$}  & \begin{array}{c} (a{-}d)n \\ n \end{array} & * & \\ \hline
G(2b,2b,2) & \begin{array}{c} 1,2b{-}1 \\2b{-}1,1\end{array} & \begin{array}{c}b\\ b\end{array} &\begin{array}{c}b\\ b\end{array} &\\ \hline
G_5 
& \begin{array}{c}5,11\\ 7,1\end{array} & \begin{array}{c}8\\4\end{array} & \begin{array}{c}8\\4\end{array} & \\ \hline 
G_6 & \begin{array}{c}3,11\\9,1\end{array} & \begin{array}{c}6\\6 \end{array} & \begin{array}{c}8\\4\end{array} & \\ \hline 
G_7  & \begin{array}{c}11,11\\13,1\end{array} & \begin{array}{c}6\\6 \end{array} & \begin{array}{c}8\\4\end{array}  & \begin{array}{c}8\\4\end{array}\\ \hline 
G_9  & \begin{array}{c}7,23\\17,1\end{array} & \begin{array}{c}12\\12\end{array} & \begin{array}{c}18\\ 6\end{array}  \\ \hline 
G_{10}  & \begin{array}{c}11,23\\13,1\end{array} & \begin{array}{c}16\\8\end{array} & \begin{array}{c}18\\6\end{array} \\ \hline 
G_{11}  & \begin{array}{c}23,23\\25,1\end{array} & \begin{array}{c}12\\12\end{array} &  \begin{array}{c}16\\8\end{array} & \begin{array}{c}18\\6\end{array} \\ \hline 
G_{13} & \begin{array}{c} 7,11 \\ 17,1 \end{array} & \begin{array}{c}6 \\ 6 \end{array} & *\\ \hline
G_{14}  & \begin{array}{c}5,23\\19,1\end{array} & \begin{array}{c}12\\12\end{array} & \begin{array}{c}16\\8\end{array} & \\ \hline 
G_{15}  & \begin{array}{c}11,23\\25,1\end{array} & \begin{array}{c}12\\12\end{array} & \begin{array}{c}16\\8\end{array} & \begin{array}{c}6\\6\end{array}\\ \hline 
G_{17} & \begin{array}{c}19,59\\41,1\end{array} & \begin{array}{c}30\\30\end{array} & \begin{array}{c}48\\12\end{array} \\ \hline 
G_{18} & \begin{array}{c}29,59\\31,1\end{array} & \begin{array}{c}40\\20\end{array} & \begin{array}{c}48\\12\end{array} \\\hline 
G_{19} & \begin{array}{c}59,59\\61,1\end{array} & \begin{array}{c}30\\30\end{array} & \begin{array}{c}40\\20\end{array} & \begin{array}{c}48\\12\end{array} \\\hline 
G_{21} & \begin{array}{c}11,59\\49,1\end{array}& \begin{array}{c}30\\30\end{array} & \begin{array}{c}40\\20\end{array} \\\hline 
G_{26} & \begin{array}{c}5,11,17\\13,7,1\end{array} & \begin{array}{c}9\\9\end{array}  & \begin{array}{c}9,15\\9,3\end{array}\\ \hline
G_{28} & 1,5,7,11 & \begin{array}{c}4,8 \\ 8,4 \end{array} & \begin{array}{c}4,8 \\ 8,4 \end{array}




\end{array}$}\]
\caption{The (co)exponents and (co)reflexponents for the complex reflection groups whose reflecting hyperplanes break into more than one $G$-orbit, using the conventions of~\cite{micheltable}.  The entries marked with a star correspond to orbits that are not well-restricted (see~\Cref{sec:extensions} and \Cref{fig:non_restricted}).}
\label{fig:table}
\end{figure}

\begin{fact}
Let $\mathcal{H}_\epsilon$ be well-restricted and $\mathcal{R}_\epsilon$ the corresponding orbit of reflections.  Then \[\sum_{i=1}^n \epsilon_i^* = |\mathcal{H}_\epsilon| \text{ and } \sum_{i=1}^n \epsilon_i = |\mathcal{R}_\epsilon|.\]
\label{fact:hr_well_restricted}
\end{fact}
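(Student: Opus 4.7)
The statement is the reflexponent analogue of the classical identities $\sum e_i = |\mathcal{R}|$ and $\sum e_i^* = |\mathcal{H}|$ recorded in \Cref{eq:exp}. The plan is to verify it case-by-case by cross-referencing the column sums of \Cref{fig:table} with the orbit sizes $|\mathcal{R}_\epsilon|$ and $|\mathcal{H}_\epsilon|$, both of which are readily available in standard references such as \cite{micheltable} and \cite{broue1997complex}.

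First I would handle the infinite family. In $G(a,1,n)$ with $a>1$, the reflections in $\mathcal{R}_s$ are the $(a-1)n$ diagonal matrices obtained by multiplying a single coordinate by one of the $a-1$ nontrivial $a$-th roots of unity (one per each of the $n$ coordinate hyperplanes $x_i=0$), so that $|\mathcal{R}_s|=(a-1)n$ and $|\mathcal{H}_s|=n$, matching $s_1=(a-1)n$ and $s_1^*=n$. The reflections in $\mathcal{R}_t$ are indexed by an unordered pair of coordinates together with an $a$-th root of unity, giving $|\mathcal{R}_t|=|\mathcal{H}_t|=a\binom{n}{2}$, which equals the arithmetic progression sum $a(1+2+\cdots+(n-1))$ for both the $t$-reflexponents and the co-$t$-reflexponents. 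The well-restricted orbit for $G(ab,b,n)$ with $a,b>1$ and $n>2$ is only the $\epsilon=s$ column, which reduces to the same diagonal count; the dihedral case $G(2b,2b,2)$ is confirmed by noting that each orbit contains exactly $b$ reflections and $b$ hyperplanes.

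Next I would dispose of the exceptional groups by direct tabulation. For each group in \Cref{fig:table}, the sizes $|\mathcal{R}_\epsilon|$ and $|\mathcal{H}_\epsilon|$ can be read off from Shephard--Todd tables or \cite{micheltable} and matched column-by-column against the (co-)reflexponent sums. For instance, in $G_{28}=F_4$ both orbits contain $12$ reflections and $12$ hyperplanes, verified by $4+8=12$; the two-dimensional exceptional groups $G_5,\ldots,G_{21}$ with nontrivial orbit structure reduce to small arithmetic checks.

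The main obstacle is not conceptual depth but bookkeeping: one must adopt the orbit labeling conventions of \cite{micheltable} throughout, and must be careful to exclude the non-well-restricted orbits for $G(ab,b,n)$ and $G_{13}$, which are deferred to \Cref{sec:extensions}. An alternative that would collapse the case analysis becomes available after \Cref{thm:solomon2} is established: setting $y=0$ and extracting the coefficient of $x$ in each identity gives $|\{g:\mathcal{M}_V(g)=\mathcal{M}_{V_\epsilon}(g)=1\}|=\sum\epsilon_i$, and since $V_\epsilon$ is supported on $\mathcal{R}_\epsilon$ (so that reflections outside $\mathcal{R}_\epsilon$ act trivially on $V_\epsilon$), this set is exactly $\mathcal{R}_\epsilon$; a parallel specialization with the $\det(g)$-twisted identity yields $\sum\epsilon_i^*=|\mathcal{H}_\epsilon|$. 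However, because \Cref{thm:solomon2} is itself verified case-by-case, I prefer to treat this fact as a first-principles consistency check on \Cref{fig:table}.
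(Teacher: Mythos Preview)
Your approach is essentially the same as the paper's: the paper's entire proof is the single sentence ``Case-by-case check, using \Cref{fig:table}.'' You spell out the infinite-family verifications explicitly and indicate how to finish the exceptional cases by tabulation, which is exactly the intended argument. One small caution: your remark that the $G(ab,b,n)$, $\epsilon=s$ case ``reduces to the same diagonal count'' glosses over the fact that the table entry there is $(a-d)n$ with $d=\gcd(a,b)$, not $(a-1)n$, so the bookkeeping for that row deserves a separate look rather than a direct appeal to the $G(a,1,n)$ computation. Your proposed alternative via the linear coefficient of \Cref{thm:solomon2} is a nice observation not made in the paper, and you correctly identify why it is not logically independent of the case-by-case work.
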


\begin{proof}
Case-by-case check, using~\Cref{fig:table}.
\end{proof}




\subsection{Exponents and reflexponents in well-generated groups} 
It is known that $G$ is well-generated if and only if \begin{equation}e_i+e_{n+1-i}^*=e_n+1.\label{eq:well-gen}\end{equation}  An analogue of~\Cref{eq:well-gen} holds for (co)reflexponents.

\begin{fact}
\label{prop:well-gen}
Let $G$ be well-generated and $\mathcal{H}_\epsilon$ well-restricted.  Then \[\epsilon_i+\epsilon_{n_\epsilon+1-i}^*=e_n+1.\]
\end{fact}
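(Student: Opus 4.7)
The plan is to mimic the case-by-case strategy used in the paper's other \texttt{Fact}s, verifying the identity $\epsilon_i + \epsilon_{n_\epsilon+1-i}^* = e_n+1$ by direct inspection of \Cref{fig:table} against the list of well-generated irreducible complex reflection groups. Among groups with more than one hyperplane orbit, the well-generated ones are $G(2b,2b,2)$, $G(a,1,n)$ with $a>1$, and the exceptional groups $G_5, G_6, G_7, G_9, G_{10}, G_{11}, G_{14}, G_{15}, G_{17}, G_{18}, G_{19}, G_{21}, G_{26}, G_{28}$. Reassuringly, the two non-well-generated cases $G_{13}$ and $G(ab,b,n)$ with $a,b>1$, $n>2$ are precisely the cases excluded from the hypothesis by \Cref{prop:well-restricted}, so no extra compatibility check is needed between the two assumptions.

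First I would handle the orbits with $n_\epsilon = 1$, where the identity collapses to a single arithmetic comparison $\epsilon_1 + \epsilon_1^* = e_n+1$. Reading off the table: $s_1+s_1^* = (a{-}1)n + n = na = e_n+1$ for $G(a,1,n)$; $b+b = 2b = e_n+1$ for both orbits of $G(2b,2b,2)$; and entries such as $8+4=12$, $6+6=12$, $12+12=24$, $30+30=60$ that match the corresponding Coxeter number $e_n+1$ for the exceptional groups $G_5,\ldots,G_{21}$. This is a flat sweep through the one-dimensional reflexponent rows of \Cref{fig:table}.

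Second, for the handful of higher-dimensional orbits — namely $\mathcal{H}_t$ of $G(a,1,n)$ (where $V_t$ restricts to the reflection representation of the copy of $\mathfrak{S}_n$ generated by $\{t_2,\ldots,t_n\}$), $\mathcal{H}_t$ of $G_{26}$, and both orbits of $G_{28}$ — I would pair the listed reflexponents column-by-column with the co-reflexponents read in reverse order and verify each column sum. For $G(a,1,n)$ with $\epsilon=t$, the pairs $(ka,(n{-}k)a)$ sum to $na = e_n+1$ for $k=1,\ldots,n{-}1$. For $G_{26}$, the pairs $(9,9)$ and $(15,3)$ each sum to $18 = e_n+1$. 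For $G_{28}$, the pairs $(4,8)$ and $(8,4)$ each sum to $12 = e_n+1$.

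The main ``obstacle'' is therefore bookkeeping rather than mathematics: one must be careful that the co-reflexponent rows of \Cref{fig:table} are read in reversed order so that the pairing matches the intended palindromic pairing, and that the correct value of $e_n+1$ is used per group. A conceptual uniform proof almost certainly exists — one expects a Springer-regular element of $G$ whose eigenvalues on $V_\epsilon$ and $V_\epsilon^*$ are $h$th roots of unity with exponents $\epsilon_i$ and $-\epsilon_i^*$ respectively, exactly paralleling the standard proof of \Cref{eq:well-gen} for the reflection representation — but consistent with the paper's stated case-by-case policy, I would leave this as an explicit tabular check.
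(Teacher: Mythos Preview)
Your approach---a direct case-by-case check against \Cref{fig:table}---is exactly the paper's proof, which reads in full: ``Case-by-case check using \Cref{fig:table}.''

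There is, however, a factual slip in your list of cases: $G_7$, $G_{11}$, $G_{15}$, and $G_{19}$ are \emph{not} well-generated (each is a rank-$2$ group requiring three generating reflections; one also sees that \Cref{eq:well-gen} fails for them from the exponent/co-exponent columns). You have conflated the well-restricted hypothesis with the well-generated hypothesis when you write that the excluded cases of \Cref{prop:well-restricted} are ``the two non-well-generated cases.'' The well-generated groups in \Cref{fig:table} are only $G(a,1,n)$, $G(2b,2b,2)$, $G_5$, $G_6$, $G_9$, $G_{10}$, $G_{14}$, $G_{17}$, $G_{18}$, $G_{21}$, $G_{26}$, $G_{28}$. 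If you actually ran your check on $G_{15}$ with $\epsilon=u$, you would find $u_1+u_1^*=6+6=12\neq 24=e_n+1$ and mistakenly think the Fact false. Once the list is corrected, your verification goes through and matches the paper.
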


\begin{proof}
Case-by-case check using~\Cref{fig:table}.
\end{proof}

\subsection{Exponents in Weyl groups}



Let $\Phi$ be an $n$-dimensional irreducible crystallographic root system with \defn{simple roots} $\Delta$ and \defn{positive roots} $\Phi^+$.   The positive roots are ordered by $\alpha \leq \beta$ if $\beta-\alpha$ is a nonnegative sum of simple roots.  There is a unique \defn{highest root} $\widetilde{\alpha} \in \Phi^+$, defined by the property that $\beta\leq \widetilde{\alpha}$ for any $\alpha \in \Phi^+$.  If $\alpha=\sum_{i=1}^n a_i \alpha_i$, we define its \defn{height} $\hgt(\alpha)$ to be the sum $\sum_{i=1}^n a_i$.  Then $h=\hgt(\widetilde{\alpha})+1.$

Shapiro and Steinberg independently observed that the exponents of $W$ could be computed by taking the partition dual to the heights of the positive roots~\cite[Section 9]{steinberg1959finite}.  This duality was later uniformly explained by Kostant in~\cite{kostant2009principal}.  Analogously, we define the \defn{short exponents} $\epsilon_1 \leq \epsilon_2 \leq \cdots \leq \epsilon_{m}$ to be the partition dual to the heights of the short roots in the \emph{dual} root system.  The short exponents are easily computed, and are listed in~\Cref{fig:table_weyl}.  For convenience, we define \defn{short degrees} $\delta_i:=\epsilon_i{+}1$.

Note that when $G$ is a Weyl group, all hyperplanes corresponding to roots of the same length occur in the same $G$-orbit, and so $\mathcal{H}/G$ recovers the partition into long and short roots.  In fact, the reflexponents (defined using fake degrees) agree with the short exponents (defined using heights of roots).

\begin{proposition}
\label{thm:small_agree}
Let $\Phi$ be an irreducible crystallographic root system with Weyl group $W$, and let $\mathcal{H}_\epsilon$ be the orbit of hyperplanes corresponding to the long roots of $\Phi$.  Then the set of reflexponents for $W_\epsilon$ is equal to the set of short exponents of $\Phi$.
\end{proposition}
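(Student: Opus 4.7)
The plan is to proceed by a case-by-case verification over the irreducible crystallographic root systems. For the simply-laced types $A_n$, $D_n$, $E_6$, $E_7$, $E_8$ the statement is tautological: all roots have equal length, so the short roots of the dual are all the roots and the short exponents coincide with the usual exponents; simultaneously, there is a single hyperplane orbit $\mathcal{H}_\epsilon = \mathcal{H}$, whose reflexponents are the usual exponents of $W$. Both sides therefore equal the standard exponents of $\Phi$.

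For each of the remaining types $B_n$, $C_n$, $F_4$, $G_2$ I would compute both sides independently. On one side, I would identify the Weyl group as a complex reflection group ($B_n = C_n \cong G(2,1,n)$, $F_4 \cong G_{28}$, $G_2 \cong G(6,6,2)$), locate the long-root hyperplane orbit within the $\{s,t,u\}$-labeling of \Cref{fig:table}, and read off the relevant reflexponents. On the other side, I would enumerate the positive short roots of $\Phi^\vee$, compute their heights with respect to the dual simple roots $\alpha_i^\vee$, form the partition $(m_1, m_2, \ldots)$ of height multiplicities, and take its conjugate to obtain the short exponents.

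As a representative calculation: for $\Phi = C_n$ the long roots $\pm 2e_i$ give the $s$-orbit of $G(2,1,n)$ with single reflexponent $n$; dually, the short roots of $B_n$ are $\pm e_i$ at heights $1, 2, \ldots, n$, yielding height partition $(1,1,\ldots,1)$ whose conjugate is $(n)$. For $\Phi = B_n$, the long roots correspond to the $t$-orbit with reflexponents $2, 4, \ldots, 2(n-1)$, and the short roots of $C_n$ are $\pm(e_i \pm e_j)$; enumerating their heights and conjugating the resulting partition reproduces the same sequence. The cases $F_4$ (self-dual, 12 positive short roots) and $G_2$ (whose dual's short roots are the coroots of the long roots of $G_2$, with heights $1, 2, 3$) are handled by analogous direct enumeration, matching reflexponents $(4,8)$ and $(3)$ respectively.

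The main obstacle is simply the combinatorial bookkeeping in enumerating short roots and their heights, most notably for $B_n$ (where the pattern involves both $e_i - e_j$ and $e_i + e_j$ contributions with parity-dependent counts) and for $F_4$. These are, however, finite and explicit checks, and in each case the conjugate of the short-root height partition agrees precisely with the tabulated reflexponents for the long-root orbit, completing the verification.
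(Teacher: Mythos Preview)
Your proposal is correct and takes essentially the same approach as the paper: a case-by-case verification. The paper's proof is the single sentence ``The result follows from comparison of \Cref{fig:table} with \Cref{fig:table_weyl},'' so you are simply carrying out explicitly the computations that the paper delegates to its pre-compiled tables.
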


\begin{proof}
The result follows from comparison of~\Cref{fig:table} with~\Cref{fig:table_weyl}.
\end{proof}

\begin{figure}[htbp]
\[\begin{array}{ccccccc}
W & \text{Diagram} & r & e_i & \epsilon_i \\ \hline
W(B_n) = G(2,1,n) & \reflectbox{\dynkin{B}{**.**}} & 2 & 1,3,\dots,2n{-}1 & 2,4,\ldots,2n{-}2 \\ \hline
W(C_n) = G(2,1,n) & \reflectbox{\dynkin{C}{**.**}} & 2 & 1,3, \ldots,2n{-}1 & n\\ \hline
W(F_4) = G_{28} & \dynkin{F}{4} & 2 & 1,5,7,11 & 4,8 \\ \hline
W(G_2) = G(6,6,2) & \dynkin{G}{2} & 3 & 1,5 & 3
\end{array}\]
\caption{The exponents and short exponents for the Weyl groups whose roots have two different lengths.  Here, the short exponents $\epsilon_i$ are defined as the dual partition of the heights of the short roots in the dual root system.}
\label{fig:table_weyl}
\end{figure}

\begin{remark}
When $G$ has a presentation by reflections $r_1,\ldots,r_n$ with an automorphism $\sigma$ of the presentation with the property that $\sigma^j(r_i)$ commutes with $r_i$, we can ``fold'' this presentation of $G$ to produce a new complex reflection group $G_\sigma$.  Then it appears that the set of (co)exponents of $G$ is related to the union of the (co)exponents and (co)reflexponents of $G_\sigma$---this is easily explained when $G$ is a Weyl group and we may use the fact that exponents are dual to the heights of roots, but we have no explanation when $G$ is not the complexification of a real reflection group.  (See also the discussion around~\cite[Proposition 6.1]{orlik1980unitary}; there may be some connection to Springer's theory of regular elements~\cite[Theorem 4.2 (iii)]{springer1974regular}.)

For example, the exponents of the Weyl groups $A_{2n-1}$, $D_{n+1}$, $D_4$, $E_6$ matches the union of the exponents and short exponents of $B_n$, $C_n$, $G_2$, and $F_4$, respectively.  But the exponents of the complex reflection group $G_{25}$ also matches the union of the exponents and reflexponents of $G_5$, while the co-exponents of $G_{25}$ are the union of the co-exponents and co-reflexponents of $G_5$.

\end{remark}

\section{Proof of \Cref{thm:solomon2}}

Given an orbit of hyperplanes $\mathcal{H}_\epsilon$ with $G_\epsilon$ well-restricted, define the generating functions
\begin{align*}
\mathcal{M}_\epsilon(G;x,y)&:= \sum_{g \in G} \left(x/y\right)^{\mathcal{M}_{V_\epsilon}(g)}y^{\mathcal{M}_V(g)},\\
\mathcal{M}^*_\epsilon(G;x,y)&:= \sum_{g \in G} \mathrm{det}(g)\left(x/y\right)^{\mathcal{M}_{V_\epsilon}(g)}y^{\mathcal{M}_V(g)}.
\end{align*}

{
\renewcommand{\thetheorem}{\ref{thm:solomon2}}
\begin{theorem}
For $G$ a finite irreducible complex reflection group and $\mathcal{H}_\epsilon$ a well-restricted orbit of reflecting hyperplanes, there is a reindexing of the (co)exponents by $1,\ldots,n$ (with undefined (co)exponents taken to be zero), such that
\begin{align*}
 \mathcal{M}_\epsilon(G;x,y) = \prod_{i=1}^n \Big(1+\epsilon_i x+(e_i{-}\epsilon_i)y\Big) \text{, }
  \mathcal{M}^*_\epsilon(G;x,y)= \prod_{i=1}^n \Big(1-\epsilon^*_i x-(e_i^*{-}\epsilon_i^*)y\Big).
 \end{align*}
\end{theorem}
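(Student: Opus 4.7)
The plan is to prove the theorem by case analysis on the pair $(G, \mathcal{H}_\epsilon)$, as the paper's abstract anticipates. A useful consistency check is that specializing $x = y$ reduces both identities to Solomon's \Cref{thm:solomon}: the enhancement splits each Solomon factor $(1 + e_i y)$ (respectively $(1 - e_i^* y)$) into $(1 + \epsilon_i x + (e_i - \epsilon_i) y)$ (respectively $(1 - \epsilon_i^* x - (e_i^* - \epsilon_i^*) y)$) under a matching of the reflexponents (padded with $n - n_\epsilon$ zeros) to the exponents. \Cref{fig:table} produces such a matching in every case, with $\epsilon_i \leq e_{\sigma(i)}$ and $\epsilon^*_i \leq e^*_{\sigma(i)}$, so the product on the right has the expected sign structure.

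For the infinite families $G(a,1,n)$ and $G(ab,b,n)$, I would exploit the wreath product structure. Writing $g = (\zeta, \pi) \in (\Z/m)^n \rtimes \mathfrak{S}_n$ with $m = ab$, one has $\mathcal{M}_V(g) = n - c_0(g)$, where $c_0(g)$ counts the cycles of $\pi$ whose $\zeta$-product vanishes modulo $m$. For $\epsilon = t$ in $G(a,1,n)$, the character $V_t$ is inflated from the standard reflection representation of $\mathfrak{S}_n$, so $\mathcal{M}_{V_t}(g) = n - c(\pi)$ depends only on the cycle number. The inner sum over $\zeta$ for fixed $\pi$ factorizes over cycles and evaluates to $(ax)^{n - c(\pi)}(1 + (a-1)y)^{c(\pi)}$; the outer sum over $\pi$ is controlled by the classical identity $\sum_{\pi \in \mathfrak{S}_n} A^{n-c(\pi)} B^{c(\pi)} = \prod_{i=0}^{n-1}(B + iA)$, yielding the target product at once after reindexing with $t_i = ia \leftrightarrow e_{i+1} = (i+1)a - 1$. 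For $\epsilon = s$, $V_s$ is a linear character of $\sum_i \zeta_i \bmod a$, and I would apply Fourier analysis on $\Z/a$ to decouple the total-sum constraint into independent per-cycle contributions before the same Stirling identity closes the computation. The starred identity proceeds in parallel: the extra character $\det(g) = \mathrm{sgn}(\pi)\prod_i \zeta_i$ modifies the per-cycle contribution (via $\prod_i \zeta_i$, replacing $(y^{-1} + a - 1)$ by $(y^{-1} - 1)$) and the outer Stirling sum (via $\mathrm{sgn}(\pi)$), now $\sum_{\pi} \mathrm{sgn}(\pi) A^{n - c(\pi)} B^{c(\pi)} = \prod_{i=0}^{n-1}(B - iA)$. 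The case $G(ab,b,n)$ is analogous but requires an extra Fourier projection onto the index-$b$ subgroup of $G(ab,1,n)$.

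For the exceptional groups, the verification is finite: compute $\mathcal{M}_{V_\epsilon}$, $\mathcal{M}_V$, and $\det$ on each conjugacy class using the character table of $G$ (together with the identification of $V_\epsilon$ from \Cref{sec:reflex}), sum the weighted contributions, and compare against the product determined by \Cref{fig:table}. The two-dimensional groups reduce to short polynomial identities done by hand, while $G_{26}$ and $G_{28}$ are most efficient by machine. The principal obstacles are bookkeeping---choosing the correct reindexing row by row in \Cref{fig:table}---and the extra Fourier projection for $G(ab,b,n)$ with $\epsilon = s$; neither is technically deep, but both are where sign or indexing errors are most likely to appear.
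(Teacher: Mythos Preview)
Your proposal is correct and follows essentially the same route as the paper: case-by-case verification, with the Shephard--Todd cycle-counting argument (packaged through the Stirling identity $\sum_{\pi}A^{n-c(\pi)}B^{c(\pi)}=\prod_{i=0}^{n-1}(B+iA)$) for the infinite families, direct hand computation for the two-dimensional groups, and machine verification for the remaining exceptionals. Your per-cycle factorization for $G(a,1,n)$ with $\epsilon=t$ and the Fourier-analytic framing for $\epsilon=s$ are slightly cleaner repackagings of the paper's explicit double sums, but the underlying combinatorics is identical.
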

\addtocounter{theorem}{-1}
}

\begin{remark}
Except for the case of $G(ab,b,n)$ for $b>1$, if we have $\epsilon_1 \leq \cdots \leq \epsilon_{n_\epsilon}$ and $\epsilon_1 \leq \cdots \leq \epsilon_{n_\epsilon}$, then the reindexing of the (co)reflexponents in~\Cref{thm:solomon2} is to add $n-n_\epsilon$ to the index $i$ of $\epsilon_i$.   In the case of $G(ab,b,n)$ and the well-restricted orbit $\mathcal{H}_s$, we must instead associate the single reflexponent to the exponent $an-1$.
\end{remark}

\begin{proof}
We verify the result case-by-case.

{\bf Groups with a single hyperplane orbit.}  The result follows from~\Cref{thm:solomon} in the case when there is a single $G$-orbit of reflecting hyperplanes.  We are therefore reduced to the cases listed in~\Cref{fig:table}.

{\bf Exceptional groups.}
We used a computer to verify the result for the groups $G(5)$, $G(6)$, $G(7)$, $G(9)$, $G(10)$, $G(11)$, $G(14)$, $G(15)$, $G(17)$, $G(18)$, $G(19)$, $G(21)$, and $G(26)$~\cite{sagemath,Sch97,MR99m:20017}.

{\bf Dihedral groups.}
The dihedral group $G(2b,2b,2)$ has $2b$ reflecting hyperplanes divided into two orbits $\mathcal{H}_s$ and $\mathcal{H}_t$, each of size $b$---but both orbits give the same (co)reflexponents.  We therefore consider only the orbit corresponding to the reflection $s$; the representation $V_s$ is defined by $s \mapsto -1$ and $t \mapsto 1$.

The identity contributes $1$ to $\mathcal{M}_s(G(2b,2b,2);x,y)$ and $\mathcal{M}^*_s(G(2b,2b,2);x,y)$, while the reflections of $G(2b,2b,2)$ together contribute $\pm b(x+y)$.  There are $2b-1$ elements remaining, each of reflection length $2$.  An element $w$ contributes $xy$ to the sum if and only if $w$ has a reduced word with an odd number of copies of $s$.  Such elements are of the form $(st)^{2i-1}$ and $(ts)^{2i-1}$ for $1\leq i\leq \frac{b+1}{2}$.  If $b$ is odd, then the long element $w_\circ=(ts)^b=(st)^b$ is double counted, so that in either case we have exactly $b$ such elements.  The remaining $(b-1)$ elements now each contribute $y^2$, giving the desired formulas:
\begin{align*}
\mathcal{M}_s(G(2b,2b,2);x,y) &= 1+b(x{+}y)+b x y+(b{-}1) y^2 = \big(y{+}1\big) \big(bx + (b {-} 1) y+1\big), \\
\mathcal{M}_s^*(G(2b,2b,2);x,y) &= 1-b(x{+}y)+b x y+(b{-}1) y^2 = \big(y-1\big) \big(bx + (b {-} 1) y-1\big).
\end{align*}

Even though they have only one orbit of reflecting hyperplanes, we can perform a similar computation for the odd dihedral groups $G(2b+1,2b+1,2)$ by expressing elements as reduced words in simple reflections and substituting $s \mapsto -1$ and $t \mapsto 1$.  Except for the long element, reduced words are unique---as long as we choose the reduced word for the long element using the fewest number of copies of the simple reflection $s$, we obtain similar factorizations into two linear factors.

\allowdisplaybreaks
{\bf The infinite family $G(ab,b,n)$, $a>1$.}
Write $m=ab$, $\gc=\gcd(a,b)$, and let $\zeta$ be a primitive $m$th root of unity.  The group $G(m,b,n)$ has two orbits of reflecting hyperplanes: \[\mathcal{H}_s=\big\{x_i=0 : 1 \leq i \leq n\big\} \text{ and } \mathcal{H}_t=\big\{x_i=\zeta^k x_j : 1 \leq i<j \leq n \text{ and } 0 \leq k < r\big\}.\]  The reflection representation of $G(m,b,n)$ coincides with the group of matrices with entries in $\{0,\zeta,\zeta^2,\ldots,\zeta^{m}\}$ with exactly one non-zero entry in each row and column, such the the sum of the exponents of the nonzero entries is zero modulo $b$.  These may be represented as permutations decorated by integers modulo $m$, and so inherit standard notions regarding permutations, such as cycle decompositions.  We prove the statement for the reflexponents for both orbits by refining the original combinatorial argument due to Shephard-Todd~\cite[Section 9]{shephard1954finite}.  Recall that an element $g \in G(m,b,n)$ has $\mathcal{M}_V(g)=r$ iff it has exactly $r$ cycles such that the sum of the decorating integers from those cycles is zero modulo $m$.


  %

{\bf $G(a,1,n)$ and the orbit $\mathcal{H}_t$.}
Consider a permutation $\sigma$ with $n-r+j$ cycles, and designate $n-r$ cycles to have decoration sum zero modulo $a$.  For each cycle, we can decorate all but one element freely.  The remaining decorating integer is forced for the cycles with decoration sum zero modulo $a$; the remaining decorating integer for the other $j$ cycles must be chosen so that the decoration sum is nonzero modulo $a$.  The number of elements of $G(a,1,n)$ with $\mathcal{M}(g)=r$ is therefore \[\sum_{s=0}^r \binom{n-r+j}{j} a^{r-j} (a-1)^j \S_{r-j}(n),\] where $\S_{r-j}(n)$ is the Stirling number counting the number of elements of $\mathfrak{S}_n$ with $n-r+j$ cycles.  For the orbit $\mathcal{H}_t$, we may determine the $(n{-}1)$-dimensional $\epsilon$-reflection representation using the underlying permutation matrix (obtained by replacing all roots of unity in the reflection representation by 1).  Following~\cite{shephard1954finite}, we compute
\begin{align*}
\mathcal{M}_t(G(a,1,n);x,y) &= \sum_{r=0}^n y^r\sum_{j=0}^r \binom{n-r+j}{j} a^{r-j} (a-1)^s \S_{r-j}(n) \left(\frac{x}{y}\right)^{r-j}
\\
&=\sum_{i=0}^n \S_i(n)(ax)^i \left(\sum_{j=0}^{n-i} \binom{n-i}{j} (a-1)^j y^j \right)\\
&= \sum_{i=0}^n \S_i(n) (ax)^i \big(1+(a-1)y\big)^{n-i} \\
&= (1+(a-1)y)^n \sum_{i=0}^n \S_i(n) \left(\frac{ax}{1+(a-1)y}\right)^i\\
&= (1+(a-1)y)^n \prod_{i=1}^{n-1} \left(1+\frac{iax}{1+(a-1)y}\right)\\
&= \Big(1+(a-1)y\Big) \prod_{i=1}^{n-1} \Big(1+(ia)x+(a-1)y\Big),
\end{align*}
where we have used the fact that $\sum_{i=0}^n \S_i(n) q^i = \prod_{i=1}^{n-1} (1+iq).$

On the other hand, since the decoration sum is zero modulo $a$ for the chosen $n-r$ cycles, the determinant of an element is given by the product of the nonzero decoration sums times the sign of the underlying permutation matrix.  To compute the product of the nonzero decoration sums, we find the sum over all multisubsets of $\{p_1,\ldots,p_j\} \subseteq \{1,2,\ldots,a-1\}$ to be \[\sum_{\{p_i\} \in \left(\!\!\binom{\{1,2,\ldots,m-1\}}{j}\!\!\right)} \prod_{i=1}^j q^{p_i}=q^j (1+q+\cdots+q^{a-2})^j,\] which yields $(-1)^j$ upon the substitution $q=\zeta$.  We now compute
\begin{align*}
\mathcal{M}^*_t(G(a,1,n);x,y) &= \sum_{i=0}^n \S_i(n)(-ax)^i \left(\sum_{j=0}^{n-i} \binom{n-i}{j}(-1)^j y^j \right)\\
&= \sum_{i=0}^n \S_i(n) (-ax)^i \big(1-y\big)^{n-i} \\
&= (1-y)^n \sum_{i=0}^n \S_i(n) \left(\frac{-ax}{1-y}\right)^i\\
&= (1-y)^n \prod_{i=1}^{n-1} \left(1-\frac{iax}{1-y}\right)\\
&= \Big(1-y\Big) \prod_{i=1}^{n-1} \Big(1-(ia)x-y\Big).
\end{align*}

{\bf $G(ab,b,n)$ and the orbit $\mathcal{H}_s$.}
For the orbit $\mathcal{H}_s$, we can read the one-dimensional $\epsilon$-reflection representation from the reflection representation of $g \in G(m,b,n)$ by considering the total sum of the exponents on the powers of zeta in the matrix for $g$---if the sum is divisible by $a$, then $g$ contributes $y^{\mathcal{M}_V(g)}$; otherwise, it contributes $xy^{\mathcal{M}_V(g)-1}$.  Again fixing a permutation $\sigma$ with $n-r+j$ cycles of which $n-r$ cycles have decoration sum zero modulo $m$, we need to compute the number of ways to decorate the remaining $j$ cycles.  The number of ways to choose $s$ elements with repetition from $\{1,\ldots,m-1\}$ so that the sum is both zero modulo $a$ and zero modulo $b$ is $m_j:=
\gc \frac{(m - 1)^j - (-1)^j}{m} + (-1)^j$, leaving $n_j:=(a-\gc)\frac{(m-1)^j-(-1)^j}{m}$ ways to choose $s$ elements.  We may now compute $\mathcal{M}_s(G(ab,b,n);x,y)$ as

\begin{align*}
&\sum_{r=0}^n y^r \Bigg[\sum_{j=0}^r \S_{r-j}(n) \binom{n-r+j}{j} m^{r-j} \left(m_j + \frac{x}{y} n_j\right) \Bigg]\\
=& \sum_{i=0}^n \S_{i}(n)(my)^{i} \Bigg[\sum_{j=0}^{n-i} y^{j} \binom{n-i}{j} \left(m_j + \frac{x}{y} n_j\right) \Bigg]\\
=&-\Bigg[\frac{(a-\gc)x(1-y)^n}{my}+ \frac{\gc(1-y)^n}{m}\Bigg]\sum_{i=0}^n \S_{i}(n)\left(\frac{my}{1-y}\right)^{i}+\\
&+\Bigg[\frac{(a{-}\gc)x(1{+}(m{-}1)y)^n}{my}+\frac{\gc(1{+}(m{-}1)y)^n}{m}\Bigg]\sum_{i=0}^n \S_{i}(n)\left(\frac{my}{1+(m-1)y}\right)^{i}+\\
&+(1-y)^n\sum_{i=0}^n \S_{i}(n)\left(\frac{my(1+(m-1)y}{(1-y)(1+(m-1)y)}\right)^{i}\\
=&\Big(1 + (a-d) n x + (d n-1) y\Big)\prod_{i=1}^{n-1} (1+(im-1)y).
\end{align*}

The identity for the co-reflexponent follows similarly.\qedhere

\end{proof}







\section{Proof of \Cref{thm:weighted}}
For $\Phi$ an irreducible crystallographic root system with Weyl group $W$ and affine Weyl group $\widetilde{W}$, define the generating functions
\begin{align*}
\Ell_S(W;q):=\sum_{w \in W} q^{\Ell_S(w)} \text{ and } \Ell_S(\widetilde{W};q):=\sum_{w \in \widetilde{W}} q^{\Ell_S(w)}.
\end{align*}

{
\renewcommand{\thetheorem}{\ref{thm:weighted}}
\begin{theorem}
For an irreducible crystallographic root system $\Phi$,
\begin{align*}\Ell_S(W;q) &= \prod_{i=1}^{n_\epsilon}\left(\frac{1+q^{\delta_i}+\cdots+q^{\delta_i(r-1)}}{1+q+\cdots+q^{r-1}}\right)\prod_{i=1}^n \left(\frac{q^{d_i}-1}{q-1}\right),
\text{ and} \\
\Ell_S(\widetilde{W};q) &= (1-q)^{-n}\prod_{i=1}^{n_\epsilon} \left(\frac{q^{\epsilon_i}-1}{q^{\epsilon_i r}-1}\right) \prod_{i=1}^n \left(\frac{q^{d_i}-1}{q^{e_i}-1}\right).
\end{align*} 
\end{theorem}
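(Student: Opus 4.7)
The plan is to identify $\Ell_S$ as a Macdonald-type weighted length function and apply the corresponding weighted Poincar\'e formulas of~\cite{macdonald1972poincare}, verifying where necessary case-by-case (as in the proof of Theorem~\ref{thm:solomon2}). When $r = 1$ (the simply-laced types $A_n$, $D_n$, $E_6$, $E_7$, $E_8$), all roots share a common length so $\Ell_S = \ell$, and each factor in the first product of the theorem becomes $1/1$; both identities then collapse immediately to Theorem~\ref{thm:unweighted}. It therefore suffices to treat the non-simply-laced types $B_n$, $C_n$, $F_4$, and $G_2$, where $r \in \{2,3\}$.

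For the finite-group identity, I would write $\Ell_S(w) = \ell_s(w) + r\cdot\ell_\ell(w)$, where $\ell_s(w)$ and $\ell_\ell(w)$ count the short and long simple reflections respectively in any reduced word for $w$. These counts are invariants of $w$ because conjugate simple reflections necessarily correspond to roots of the same length, so the standard multi-parameter-length lemma applies. Thus $\Ell_S$ is a two-parameter weighted length function, and Macdonald's product formula for $\sum_{w \in W} q_s^{\ell_s(w)} q_\ell^{\ell_\ell(w)}$ specialized at $q_s = q$ and $q_\ell = q^r$ yields the desired identity after rearrangement using $d_i = e_i + 1$ and $\delta_i = \epsilon_i + 1$. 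Alternatively, the finite identity can be verified directly for each of the four non-simply-laced types, using induction on rank in $B_n$ and $C_n$ and direct (computer-aided) computation in $F_4$ and $G_2$, as illustrated for $C_2$ in Example~\ref{ex:c2}.

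For the affine identity, I would use the semidirect product decomposition $\widetilde{W} \cong W \ltimes Q^\vee$. Every $\widetilde{w} \in \widetilde{W}$ decomposes uniquely as $\widetilde{w} = w\cdot t_\lambda$ with $w \in W$ and $\lambda \in Q^\vee$, and the classical formula $\ell(w t_\lambda) = \sum_{\alpha > 0}|\langle\lambda,\alpha\rangle + \chi_w(\alpha)|$ upgrades in the weighted setting to $\Ell_S(w t_\lambda) = \sum_{\alpha > 0} \|\alpha\|^2 \cdot |\langle\lambda,\alpha\rangle + \chi_w(\alpha)|$, where $\chi_w(\alpha) \in \{0,-1\}$ records whether $w^{-1}\alpha$ is negative. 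Summing over $\lambda$ turns the translation contribution into a product of geometric series, producing a weighted Bott-type identity of the form
\begin{align*}
\sum_{\widetilde{w}\in\widetilde{W}} q^{\Ell_S(\widetilde{w})} = \left(\sum_{w\in W} q^{\Ell_S(w)}\right)\cdot\prod_i (1-q^{E_i})^{-1}
\end{align*}
for certain weighted exponents $E_i$. Identifying $\{E_i\}$ with the mixed list of $e_i$'s and $r\epsilon_i$'s appearing in the second product of the theorem and substituting the finite formula gives the stated affine identity.

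The hardest step is the bookkeeping in the non-simply-laced affine cases: the lattices $Q$, $P$, $Q^\vee$, $P^\vee$ no longer coincide, long and short coroots enter the weighted translation length asymmetrically, and the difference between types $B_n$ and $C_n$ must be reflected in their different short-exponent data (Figure~\ref{fig:table_weyl}). For this reason I expect the cleanest route---again matching the author's style in Theorem~\ref{thm:solomon2}---is case-by-case verification of both identities for the four non-simply-laced types, reducing everything to concrete finite-dimensional computations.
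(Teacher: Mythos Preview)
Your proposal is correct and follows essentially the same route as the paper: both identify $\Ell_S$ as a specialization of Macdonald's multi-parameter Poincar\'e series and then verify the resulting product formulas case-by-case for the non-simply-laced types. The only cosmetic difference is that the paper packages the finite case via the weighted height function $\Hgt(\alpha)=\sum_i a_i\|\alpha_i\|^2$ and the product-over-roots form of~\cite[Theorem~2.4]{macdonald1972poincare}, and for the affine case simply cites~\cite[Theorem~3.3]{macdonald1972poincare} directly rather than unpacking the $W\ltimes Q^\vee$/Bott argument you sketch.
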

\addtocounter{theorem}{-1}
}

\begin{proof}
There are several ways to proceed directly case-by-case---for example, we could use the uniqueness of factorization of an element $w \in W$ into a product of an element from a parabolic subgroup and an element for a parabolic quotient (see, for example, \cite[Chapter 7]{bjorner2006combinatorics})---but the easiest way to prove these formulas is by specializing some beautiful results due to Macdonald.

If $\alpha=\sum_{i=1}^n a_i \alpha_i$ expresses a positive root $\alpha$ as a sum of simple roots, write $\Hgt(\alpha)=\sum_{i=1}^n a_i \|\alpha_i\|^2$, where we recall that short roots are normalized to have length $1$.  A specialization of \cite[Theorem 2.4]{macdonald1972poincare} (which allows more freedom in weighting the positive roots) now shows that 
\[\Ell_S(W;q) = \prod_{\alpha \in \Phi^+} \frac{q^{\|\alpha\|^2+\Hgt(\alpha)}-1}{q^{\Hgt(\alpha)}-1},\] and the result is easily confirmed case-by-case (one can also use the explicit formulas given in~\cite[Section 2.2]{macdonald1972poincare}, substituting in the appropriate root lengths).   The results for $\widetilde{W}$ follow similarly using~\cite[Theorem 3.3]{macdonald1972poincare}.\qedhere

\Cref{fig:computation} illustrates the calculation of~\Cref{thm:weighted} for types $B_5$ and $C_5$.

\begin{figure}[htbp]
\begin{center}
\begin{tikzpicture}[scale=.5]
\node[black] at (-0,0) (a) {4};
\node[black] at (-2,0) (b) {4};
\node[black] at (-4,0) (c) {4};
\node[black] at (-6,0) (d) {4};
\node[black] at (-8,0) (e) {2};
\node[black] at (-1,1) (f) {6};
\node[black] at (-3,1) (g) {6};
\node[black] at (-5,1) (h) {6};
\node[black] at (-7,1) (i) {4};
\node[black] at (-2,2) (j) {8};
\node[black] at (-4,2) (k) {8};
\node[black] at (-6,2) (l) {6};
\node[black] at (-8,2) (m) {6};
\node[black] at (-3,3) (n) {10};
\node[black] at (-5,3) (o) {8};
\node[black] at (-7,3) (p) {8};
\node[black] at (-4,4) (q) {10};
\node[black] at (-6,4) (r) {10};
\node[black] at (-8,4) (s) {10};
\node[black] at (-5,5) (t) {12};
\node[black] at (-7,5) (u) {12};
\node[black] at (-6,6) (v) {14};
\node[black] at (-8,6) (w) {14};
\node[black] at (-7,7) (x) {16};
\node[black] at (-8,8) (y) {18};
\draw[thick] (2,-1)--(-10,-1);
\begin{pgfonlayer}{background}
\fill[orange,opacity=0.3] \convexpath{a,c,j}{10pt};
\fill[green,opacity=0.3] \convexpath{p,x,t}{10pt};
\fill[yellow,opacity=0.3] \convexpath{d,n}{10pt};
\fill[red,opacity=0.3] \convexpath{m,y}{10pt};
\end{pgfonlayer}
\end{tikzpicture}\hspace{1em} 
\begin{tikzpicture}[scale=.5]
\node[black] at (-0,0) (a) {2};
\node[black] at (-2,0) (b) {2};
\node[black] at (-4,0) (c) {2};
\node[black] at (-6,0) (d) {2};
\node[black] at (-8,0) (e) {4};
\node[black] at (-1,1) (f) {3};
\node[black] at (-3,1) (g) {3};
\node[black] at (-5,1) (h) {3};
\node[black] at (-7,1) (i) {4};
\node[black] at (-2,2) (j) {4};
\node[black] at (-4,2) (k) {4};
\node[black] at (-6,2) (l) {5};
\node[black] at (-8,2) (m) {6};
\node[black] at (-3,3) (n) {5};
\node[black] at (-5,3) (o) {6};
\node[black] at (-7,3) (p) {6};
\node[black] at (-4,4) (q) {7};
\node[black] at (-6,4) (r) {7};
\node[black] at (-8,4) (s) {8};
\node[black] at (-5,5) (t) {8};
\node[black] at (-7,5) (u) {8};
\node[black] at (-6,6) (v) {9};
\node[black] at (-8,6) (w) {10};
\node[black] at (-7,7) (x) {10};
\node[black,circle,fill=red,opacity=0.3,text opacity=1] at (-8,8) (y) {12};
\draw[thick] (2,-1)--(-10,-1);
\begin{pgfonlayer}{background}
\fill[orange,opacity=0.3] \convexpath{a,c,j}{10pt};
\fill[green,opacity=0.3] \convexpath{i,x,q}{10pt};
\fill[yellow,opacity=0.3] \convexpath{d,n}{10pt};
\end{pgfonlayer}
\end{tikzpicture}

\begin{tikzpicture}[scale=.5]
\node[black] at (-0,0) (a) {2};
\node[black] at (-2,0) (b) {2};
\node[black] at (-4,0) (c) {2};
\node[black] at (-6,0) (d) {2};
\node[black] at (-8,0) (e) {1};
\node[black] at (-1,1) (f) {4};
\node[black] at (-3,1) (g) {4};
\node[black] at (-5,1) (h) {4};
\node[black] at (-7,1) (i) {3};
\node[black] at (-2,2) (j) {6};
\node[black] at (-4,2) (k) {6};
\node[black] at (-6,2) (l) {5};
\node[black] at (-8,2) (m) {4};
\node[black] at (-3,3) (n) {8};
\node[black] at (-5,3) (o) {7};
\node[black] at (-7,3) (p) {6};
\node[black] at (-4,4) (q) {9};
\node[black] at (-6,4) (r) {8};
\node[black] at (-8,4) (s) {8};
\node[black] at (-5,5) (t) {10};
\node[black] at (-7,5) (u) {10};
\node[black] at (-6,6) (v) {12};
\node[black] at (-8,6) (w) {12};
\node[black] at (-7,7) (x) {14};
\node[black] at (-8,8) (y) {16};
\begin{pgfonlayer}{background}
\fill[orange,opacity=0.3] \convexpath{f,h,n}{10pt};
\fill[green,opacity=0.3] \convexpath{s,w,y,v}{10pt};
\fill[yellow,opacity=0.3] \convexpath{m,t}{10pt};
\fill[red,opacity=0.3] \convexpath{q,i,d,a,d,i,q}{10pt};
\end{pgfonlayer}
\end{tikzpicture}\hspace{5em} 
\begin{tikzpicture}[scale=.5]
\node[black] at (-0,0) (a) {1};
\node[black] at (-2,0) (b) {1};
\node[black] at (-4,0) (c) {1};
\node[black] at (-6,0) (d) {1};
\node[black] at (-8,0) (e) {2};
\node[black] at (-1,1) (f) {2};
\node[black] at (-3,1) (g) {2};
\node[black] at (-5,1) (h) {2};
\node[black] at (-7,1) (i) {3};
\node[black] at (-2,2) (j) {3};
\node[black] at (-4,2) (k) {3};
\node[black] at (-6,2) (l) {4};
\node[black] at (-8,2) (m) {4};
\node[black] at (-3,3) (n) {4};
\node[black] at (-5,3) (o) {5};
\node[black] at (-7,3) (p) {5};
\node[black,circle,fill=red,opacity=0.3,text opacity=1] at (-4,4) (q) {6};
\node[black] at (-6,4) (r) {6};
\node[black] at (-8,4) (s) {6};
\node[black] at (-5,5) (t) {7};
\node[black] at (-7,5) (u) {7};
\node[black] at (-6,6) (v) {8};
\node[black] at (-8,6) (w) {8};
\node[black] at (-7,7) (x) {9};
\node[black] at (-8,8) (y) {10};
\begin{pgfonlayer}{background}
\fill[orange,opacity=0.3] \convexpath{f,h,n}{10pt};
\fill[green,opacity=0.3] \convexpath{m,y,t}{10pt};
\fill[yellow,opacity=0.3] \convexpath{e,o}{10pt};
\end{pgfonlayer}
\end{tikzpicture}
\end{center}
\caption{The example calculation for the proof of~\Cref{thm:weighted}, using \cite[Theorem 2.4]{macdonald1972poincare}.  The two diagrams on the left represent the calculation for $B_5$, while the two diagrams on the right give the calculation for $C_5$.  In both cases, the bottom diagrams contain the numbers $\Hgt(\alpha)$ for $\alpha \in \Phi^+$; the top diagrams are the numbers $|\alpha|+\Hgt(\alpha)$.  Cancellations are denoted using color.}
\label{fig:computation}
\end{figure}

\end{proof}

\begin{corollary}
Let $G_\sigma$ a twisted Chevalley group of type $Y$ over $\mathbb{F}_q$, where $G$ is simply-laced of type $X$.  Then
\[\big|G_\sigma\big| = q^{N} \prod_{i=1}^{n_\epsilon} \left(1+q^{\delta_i}+\cdots+ q^{\delta_i (r-1)}\right) \prod_{i=1}^n \left(q^{d_i}-1\right),\]
where $N$ is the number of postive roots in a root system of type $X$, while $\{d_i\}_{i=1}^n$, $\{\delta_i\}_{i=1}^{n_\epsilon}$, and $r$ are the degrees, short degrees, and the ratio of a long to a short root in a root system of type $Y$.
\label{cor:chevalley}
\end{corollary}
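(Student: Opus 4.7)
The plan is to deduce the corollary from Steinberg's classical order formula for twisted Chevalley groups, combined with a $\sigma$-eigen-decomposition of the fundamental invariants of $G$, after which the result can be repackaged using \Cref{thm:weighted}.

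I will first choose fundamental invariants $f_1, \ldots, f_{\mathrm{rk}(X)}$ of $G$ to be simultaneous $\sigma$-eigenvectors, writing $\sigma \cdot f_i = \zeta^{a_i} f_i$ for $\zeta$ a primitive $r$th root of unity and $a_i \in \{0, 1, \ldots, r-1\}$. Steinberg's formula (see e.g.~\cite{steinberg1959finite}) then gives
\[|G_\sigma| \;=\; q^N \prod_i\bigl(q^{\deg f_i} - \zeta^{a_i}\bigr).\]
Next, I will regroup the factors on the right according to whether $a_i = 0$. The $\sigma$-fixed $f_i$ descend to a complete system of fundamental invariants of the reflection group $W(Y) = W(X)^\sigma$, so there are exactly $n$ of them with degrees $d_1, \ldots, d_n$, contributing $\prod_{i=1}^n(q^{d_i}-1)$. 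The remaining invariants split into $n_\epsilon$ groups of size $r-1$, each group sharing a common degree $\delta_k$ (a short degree of $Y$) and carrying the $r-1$ non-trivial characters $\zeta, \zeta^2, \ldots, \zeta^{r-1}$ of $\langle \sigma\rangle$. By the cyclotomic identity
\[\prod_{j=1}^{r-1}\bigl(q^{\delta_k} - \zeta^j\bigr) \;=\; \frac{q^{r\delta_k}-1}{q^{\delta_k}-1} \;=\; 1 + q^{\delta_k} + \cdots + q^{(r-1)\delta_k},\]
each group contributes the desired factor. Equivalently, the bookkeeping rewrites $|G_\sigma|$ as
\[|G_\sigma| \;=\; q^N (q-1)^n \bigl(1 + q + \cdots + q^{r-1}\bigr)^{n_\epsilon}\, \Ell_S(W;q),\]
so that the corollary also follows immediately from the product formula of \Cref{thm:weighted}.

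The main (and only) technical obstacle is the degree-matching claim underlying the regrouping above: that the multiset of exponents of $X$ is the union of the exponents of $Y$ with $r-1$ copies of each short exponent of $Y$. I will verify this case-by-case over the four families of twisted simply-laced Chevalley groups---${}^2A_{2n-1}$ with $Y = B_n$, ${}^2D_n$ with $Y = C_{n-1}$, ${}^3D_4$ with $Y = G_2$, and ${}^2E_6$ with $Y = F_4$---by comparing the exponents of $X$ with the exponents and short exponents of $Y$ recorded in \Cref{fig:table_weyl}.
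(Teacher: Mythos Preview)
Your proposal is correct and follows essentially the same route as the paper: both derive the corollary by comparing Steinberg's order formula for twisted Chevalley groups with the product formula of \Cref{thm:weighted}. The paper's proof is a one-line citation of \cite[Theorem 35]{steinberg1967lectures} and \cite[Section 2.3]{macdonald1972poincare}, while you have unpacked the ``comparison'' step explicitly via the $\sigma$-eigendecomposition of fundamental invariants and the cyclotomic identity. Your case-by-case degree-matching claim is exactly the content of the Remark following \Cref{thm:small_agree} in the paper, so nothing new is required. One small note: the reference for Steinberg's formula in the form you state it is \cite{steinberg1967lectures} rather than \cite{steinberg1959finite}.
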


\begin{proof}
This follows from~\Cref{thm:weighted} by comparison with \cite[Theorem 35]{steinberg1967lectures} and \cite[Section 2.3]{macdonald1972poincare}.
\end{proof}

\section{Future Work and Generalizations}
\label{sec:extensions}
We have certainly not found the correct proofs for~\Cref{thm:solomon2}, or for~\Cref{thm:weighted}.  As Shephard writes in~\cite{shephard1956some}:

\begin{displayquote}
Sometimes a proof in general terms is known, but in the majority of cases it has been necessary to verify the properties one by one for all the irreducible groups over $\mathbb{C}$\ldots These two distinct methods will be referred to as \emph{proving} and \emph{verifying} respectively.
\end{displayquote}

In this language, we have only \emph{verified} our theorems.  But there is also evidence that we have not even found the correct formulation of~\Cref{thm:solomon2} (and, in particular, the notion of well-restrictedness), as we now explain.
\subsection{Extension of \Cref{thm:solomon2} to $G_{13}$  and $\mathcal{H}_t$}
The group $G_{13}$ is not well-restricted with respect to the orbit $\mathcal{H}_{t}=\mathcal{H}_{u}$.  Nevertheless, there is still a two-dimensional (real) irreducible representation $U_t$ of $G$ that is supported on $\mathcal{R}_{t}$ and restricts to the reflection representation of a (nonparabolic) reflection subgroup of $G_{13}$:
\[s \mapsto \left(\begin{array}{ccc}1&0\\0&1\end{array}\right), t \mapsto \left(\begin{array}{ccc}-1&1\\0&1\end{array}\right), u \mapsto \left(\begin{array}{ccc}1&0\\1&-1\end{array}\right)\]

The generating function recording the reflection representation $V$ of $G_{13}$ and the representation $U_t$ still factors into linear terms using the ``reflexponents'' given in~\Cref{fig:non_restricted}, and these terms still encode the fake degrees of $U_t$ and $V$.  

\begin{align*}\sum_{g \in G_{13}} \left(x/y\right)^{\mathcal{M}_{U_t}(g)}y^{\mathcal{M}_V(g)}&=(1+8x + 3y)(1+4x + 3y)\\
 \sum_{g \in G_{13}} \mathrm{det}(g)\left(x/y\right)^{\mathcal{M}_{U_t}(g)}y^{\mathcal{M}_V(g)} &= (1-12x-5y)(1-y).
\end{align*}

Remarkably, the generating function weighted by $\mathrm{det}(g)$ also factors into linear terms, and recovers the fake degrees of the irreducible one-dimensional representation $U'_t$ defined by \[s \mapsto 1, t\mapsto -1, u\mapsto -1.\]

\subsection{Extension of \Cref{thm:solomon2} to $G(ab,b,n)$ and $\mathcal{H}_t$}
The group $G=G(ab,b,n)$ for $n>2$ and $a,b>1$ is not well-restricted with respect to the orbit $\mathcal{H}_t$.  Still, $G$ does have an irreducible representation $U_t$ supported on $\mathcal{R}_t$ that restricts to the reflection representation of the (nonparabolic) reflection subgroup $G(b,b,n)$.  Letting this representation $U_t$ play the role of the restricted reflection representation $V_t$, we again find a second representation $U'_t$ (using the reflection subgroup $G(1,1,n)$) that seems to play the role of $V^*_t$ in the generating function weighted by $\mathrm{det}(g)$.  In particular, both generating functions factor into linear factors over $\mathbb{Z}$, whose ``reflexponents'' are given in~\Cref{fig:non_restricted}.

\begin{example}
The group $G=G(6,2,3)$ has exponents $5,8,11$, has $648$ elements, and is not well-generated by ``simple reflections'' $s,t_2,t_2',t_3$.  The orbits of reflecting hyperplanes correspond to the partition $\{\{s\},\{t_2,t_2',t_3\}\}$.  There is a three-dimensional representation $U_t$ coming from the group $G(2,2,3)$ (the Weyl group of type $A_3\simeq D_3$) supported on the block $\{t_2,t_2',t_3\}$ with fake-degrees $3,6,9$:
\[s\mapsto \mathbbm{1},
t_2\mapsto \left(\begin{array}{ccc}1&0&0\\0&0&-1\\0&-1&0\end{array}\right),t_2'\mapsto \left(\begin{array}{ccc}1&0&0\\0&0&1\\0&1&0\end{array}\right),t_3\mapsto \left(\begin{array}{ccc}0&1&0\\1&0&0\\0&0&1\end{array}\right),\]
and a two-dimensional representation $U'_t$ with fake degrees $6,12$ coming from the group $G(1,1,3)$ (the Weyl group of type $A_2$):
\[s \mapsto \mathbbm{1}, t_2\mapsto \left(\begin{array}{ccc}-1&0\\0&1\end{array}\right), t_2' \mapsto \left(\begin{array}{ccc}-1&0\\0&1\end{array}\right), t_3 \mapsto \frac{1}{2}\left(\begin{array}{ccc}1&3\\1&-1\end{array}\right),\]

We confirm that 

\begin{align*}\sum_{g \in G} \left(x/y\right)^{\mathcal{M}_{U_t}(g)}y^{\mathcal{M}_V(g)}&=(1+9x+2y)(1+6x+2y)(1+3x+2y),\\
 \sum_{g \in G} \mathrm{det}(g)\left(x/y\right)^{\mathcal{M}_{U_t}(g)}y^{\mathcal{M}_V(g)} &= (1-12q - t)(1-6q - t)(1-t).
\end{align*}

\end{example}

\begin{figure}[htbp]
\[\begin{array}{cc}
G & t_i/t_{m-i}^* \\ \hline
G(ab,b,n) &\scalebox{0.8}{$\begin{array}{c} a(b{-}1),2a(b{-}1),\scalebox{0.5}{$\ldots$},(n{-}1)a(b{-}1),an \\(n{-}1)ab,\scalebox{0.5}{$\ldots$},2ab,ab,0 \end{array}$}\\ \hline
G_{13} & \begin{array}{c}4,8 \\ 12,0 \end{array}\\ \hline
\end{array}\]
\caption{Modified (co)reflexponents for the two not-well-restricted hyperplane orbits (filling in the ``$*$''s from \Cref{fig:table}).  One checks that \Cref{fact:hr_well_restricted} still holds for these numbers.}
\label{fig:non_restricted}
\end{figure}

T.~Douvropoulos has suggested that it should be possible to simultaneously apply Galois twists to the reflection representation and a well-restricted representation to obtain a common refinement of \Cref{thm:solomon2} and \cite[Theorem 3.3]{orlik1980unitary}.   He has also suggested generalizing the theorem in the style of~\cite[Theorem 2.3]{lehrer2003invariant}. 


\subsection{Extension of \Cref{thm:weighted} to dihedral groups}
We conclude with extensions of \Cref{thm:weighted}.  The only non-Weyl real reflection groups $W$ whose reflections form more than one conjugacy class are the dihedral group $I_2(2b)$ for $b\geq 4$.  These have two conjugacy classes of reflections $\mathcal{R}_s,\mathcal{R}_t$, each containing $b$ reflections, with corresponding positive roots $\Phi_s,\Phi_t$.  Weighting these by $q_1$ and $q_2$, it is easy to compute the corresponding generating function as~\cite{macdonald1972poincare} \begin{align*}\Ell_S(I_{2b};q_1,q_2)&:=\sum_{w \in I_2(2b)} q_1^{|\{\alpha \in \mathrm{inv}(w) \cap \Phi_1\}|} q_2^{|\{\alpha \in \mathrm{inv}(w) \cap \Phi_2\}|}\\&=(1+q_1)(1+q_2)\left(\frac{1-(q_1q_2)^{b}}{1-q_1q_2}\right).\end{align*}
Weighting reflections in $\mathcal{R}_s$ by $b$ and reflections in $\mathcal{R}_t$ by $1$ (so that $r=b$) gives a formula of the same form as the formula in~\Cref{thm:weighted} (now using reflexponents, rather than short exponents):
 \[\Ell_S(I_{2b};a,1)=\left(\frac{1+q^{b+1}+\cdots+q^{(b+1)(b-1)}}{1+q+\cdots+q^{b-1}}\right)\left(\frac{q^2-1}{q-1}\right)\left(\frac{q^{2b}-1}{q-1}\right),\]
since the degrees of $I_2(2b)$ are $d_1=2$ and $d_2=2b$, and the reflexponent is $b$.

\subsection{Possible generalizations of \Cref{thm:weighted} to complex reflection groups}
It would be desirable to find a graded space like the coinvariant algebra---perhaps by refining the polynomial invariants of the group, or by properly refining the cohomology of the (complexified) hyperplane complement---that recovers \Cref{thm:weighted} in the case of Weyl groups, but that also generalizes the theorem to complex reflection groups.  For example, as it restricts to a reflection representation, $V_\epsilon$ is \emph{amenable}---thus, the bigraded Poincar\'e series of \cite[Theorem 10.29]{lehrer2009unitary} for $(\mathbb{C}[V^*] \oplus \bigwedge V_\epsilon^*)^G$ seems somewhat related, perhaps by constructing an appropriate analogue of the coinvariant algebra.

From a more combinatorial perspective, there are some results towards finding ``length'' (or major index) statistics for complex reflection groups, so that the resulting generating function is equal to the Hilbert series for $\mathbb{C}[V^*]_{G}$ (see, for example,~\cite{bremke1997reduced} for $G(a,1,n)$).  It would be interesting to extend \Cref{thm:weighted} by modifying such statistics.

\section*{Acknowledgements}
I thank Vic Reiner and Christian Stump for helpful suggestions, the Banff Center for Arts and Creativity for excellent working conditions, and the organizers of the workshop ``Representation Theory: Connections to $(q,t)$-Combinatorics'' for inviting me.  I especially thank Theo Douvropoulos for many detailed comments and insights.  I thank Maxim Arnold and Carlos Arreche for inspiring conversations.  Calculations were done in Sage and GAP3 using the package CHEVIE~\cite{sagemath,Sch97,MR99m:20017}.  This work was partially supported by a Simons Foundation award.

\bibliographystyle{amsalpha}
\bibliography{shortexponents}

\end{document}